\newcommand{\RED}[1]{{\color{red}{#1}}}
\newcommand{\ignore}[1]{}
\newcommand{\beqn}{\begin{eqnarray*}}
\newcommand{\eeqn}{\end{eqnarray*}}
\newcommand{\eps}{\varepsilon}
\newcommand{\ee}{\mathrm{e}}
\newcommand{\Z}{\mathbb{Z}}
\newcommand{\R}{\mathbb{R}}
\newcommand{\shf}{\mbox{\footnotesize $\frac{1}{2}$}}
\newcommand{\sot}{\mbox{\footnotesize $\frac{1}{3}$}}
\newcommand{\sq}{\mbox{\footnotesize $\frac{1}{4}$}}
\newcommand{\dd}{\mathrm{d}}
\newcommand{\Matrixc}[1]{\ensuremath{\left(\begin{array}{ccccccccccccccccr} #1 \end{array}\right)}}
\newcommand{\Matrix}[1]{\ensuremath{\left[\begin{array}{ccccccccccccccccr} #1 \end{array}\right]}}
\newcommand{\etal}{{\em et al.}}
\newcommand{\CC}{{\mathcal C}}
\newcommand{\GG}{{\mathcal G}}
\newcommand{\HH}{{\mathcal H}}
\newcommand{\MM}{{\mathcal M}}
\newcommand{\NN}{{\mathcal N}}
\newcommand{\TT}{{\mathcal T}}
\newcommand{\tr}{\mbox{tr}}
\newtheorem{theorem}{Theorem}[section]
\newtheorem{lemma}[theorem]{Lemma}
\newtheorem{definition}[theorem]{Definition}
\newtheorem{proposition}[theorem]{Proposition}
\newtheorem{remark}[theorem]{Remark}
\newtheorem{example}[theorem]{Example}
\renewcommand{\epsilon}{\varepsilon}
\newcommand{\trace}{\mbox{{\rm tr}}}
\newcommand{\ii}{\mbox{i}}
\newcommand{\END}{\hfill\mbox{\raggedright$\Diamond$}}
\title{Stable Synchronous Propagation in\\
Feedforward Networks for Biped Locomotion}
\author{Ian Stewart  and David Wood\\ Mathematics Institute\\ University of Warwick \\ Coventry CV4 7AL
\\ United Kingdom}
\date{\today}
\begin{document}
\maketitle

\begin{abstract}
Rhythmic gait patterns in animal locomotion are widely believed to be
produced by a central pattern generator (CPG), a network of neurons that drives
the muscle groups. In previous papers we have discussed how phase-synchronous
signals can propagate along chains of neurons using a feedforward lift
of the CPG, given sufficient conditions for stability to synchrony-breaking
perturbations, and shown that stable signals are common for
 four standard neuron models. Here we
apply these ideas to biped locomotion using a fifth model: Wilson--Cowan (or rate model)
neurons. Feedforward architecture propagates the 
phase pattern of a CPG along a chain of identical modules.
For certain rate models, we give analytic conditions that are sufficient for transverse
Liapunov and Floquet stability.  We compare different notions 
of transverse stability, summarise some
numerical simulations, and outline an application to signal propagation in
a model of biped locomotion.
\end{abstract}


\section{Introduction}
\label{S:intro}

The observation that legged animals move using a variety of rhythmic patterns
goes back at least to Aristotle \cite{A36}, who wondered whether a trotting horse can be
completely off the ground during some stages of its motion. His answer (`no') was
disproved by Muybridge \cite{M99}, who invented a camera with a rapid shutter
and used lines of such cameras, triggered by tripwires, to photograph
animals and humans in motion. It is often claimed that this allowed Leland Stanford Jr. to
win a large wager (a figure of \$25,000 to \$50,000 is typical), but 
Stanford was not a betting man and no money was involved \cite{D17}.
However, the same source cites evidence that he was in a dispute with 
Frederick McCrellish about this question.

Such patterns of movement are called {\em gaits} \cite{G68, G74}. 
They also occur for organisms such as arthropods 
(especially centipedes and millipedes), snakes, worms, and lampreys.
It is widely accepted that the underlying rhythms of gaits arise from the dynamics of
a {\em central pattern generator} (CPG), a neural `circuit' that naturally produces
oscillatory patterns.  A recent survey is \cite{B19}, and the prevailing paradigm
of alternating activity in excitatory and inhibitory neurons
is laid out in \cite{AC18,MR08}. Observations challenging some aspects of 
this paradigm are described in \cite{LPVB22}; they do not affect this paper.

CPG networks have been observed in some organisms,
and inferred in many others. In vertebrates the CPG appears to reside in
the spinal cord. The CPG rhythms drive the muscle groups that cause motion.
Many gaits are `symmetric' \cite{M68}: the movements of distinct legs are synchronised, or
are related by phase shifts that are a simple fraction of the overall period.
An example, the quadruped walk gait, is shown in Figure~\ref{F:ele_walk_outline}.
The legs move in the sequence left rear, left front, right rear, right front,
separated by one quarter of the period of the overall gait cycle.

\begin{figure}[h!]
\centerline{%
\includegraphics[width=0.75\textwidth]{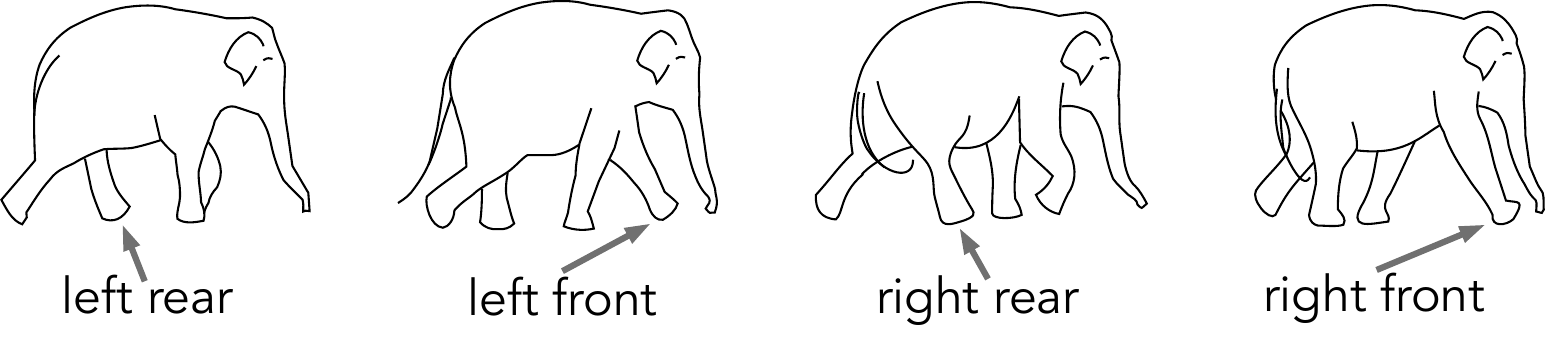}
}
\caption{Walk of an elephant: successive legs hit the ground at $\sq$-period intervals.}
\label{F:ele_walk_outline}
\end{figure}

\subsection{Biped Gaits}
In this paper we examine a simpler case, that of bipeds. 
The most obvious CPG model for $n$-legged animals is a network with $n$
nodes, whose topology is constructed to create the phase 
patterns typically observed in gaits \cite{CS93a,CS93b}. 
However, such a model for quadrupeds ($n=4$) predicts that the trot and
pace gaits are `dynamically conjugate'---related by an element of the symmetry
group of the model ODE---so they should coexist for all parameter
values. This is inconsistent with observations---for example, some breeds of horse,
especially those bred for harness racing,
can trot but not pace \cite{H93}. A proposed modification \cite{GSBC98,GSCB99}, derived from known features of quadruped gaits, 
doubles the number of nodes to $2n$. Each leg corresponds to two 
nodes of the network, interpreted as controlling two muscle
groups: flexors and extensors. The resulting CPG architecture
has $\Z_n \times \Z_2$ symmetry: an $n$-cycle on each side of the animal
combined with left-right symmetry. This $n$-cycle controls the $n/2$ legs
on the corresponding side of the animal, so there are two nodes per leg.
This topology
leads to several predictions that are
consistent with observations, including half-integer wave numbers in
arthopod gaits.

\begin{figure}[h!]
\centerline{%
\includegraphics[width=.4\textwidth]{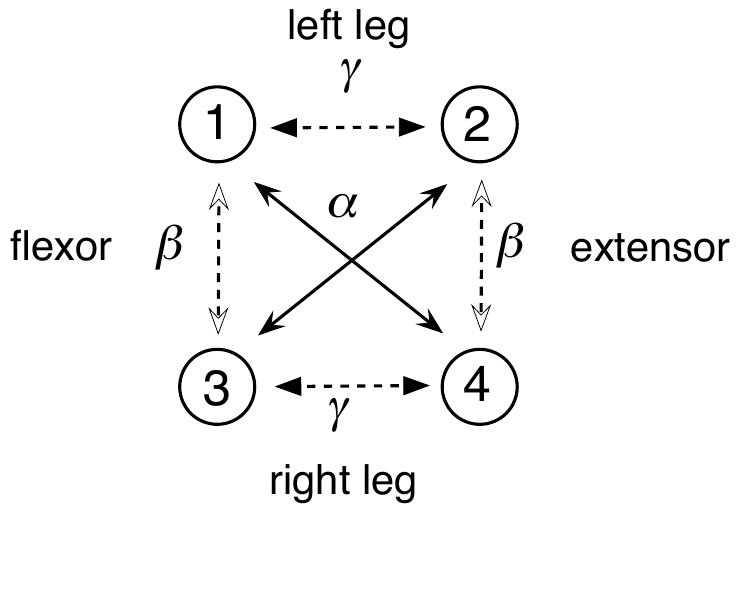} 
}
\caption{A 4-node CPG network for bipedal gaits, proposed in \cite{PG06}.}
\label{F:4CPG}
\end{figure}

Pinto and Golubitsky \cite{PG06} analysed the corresponding
network model of a CPG for bipeds ($n=2$): 
a 4-node CPG with $\Z_2 \times \Z_2$ symmetry,
Figure \ref{F:4CPG}. We have rotated their figure through a 
right angle and relabelled the nodes, for consistency with later networks.
In their model, two nodes control extensor and flexor muscle groups 
of the ankle of the left leg, and
the other two control extensor and flexor muscle groups of the ankle of the right leg.
The activity of these muscles and the corresponding ankle movements are idealised,
so that the muscles either act in synchrony or are separated by a 
half-period phase shift. This idealisation is reasonably close to
reality, see \cite{M82, M86}.
Other muscle groups of course play a role, but are not
represented in the model. 

The $H/K$ Theorem of \cite{BG01} (originally motivated by gait patterns
and generalised to arbitrary symmetric dynamical systems)
classifies the possible phase patterns in a dynamical system with symmetry
group $\Gamma$ in terms of pairs of subgroups $H \subseteq K$ of $\Gamma$.
Here $K$ is the group of pointwise symmetries of the periodic orbit and
$K$ is the group of setwise symmetries. The main condition is that
$K$ must be a normal subgroup of $H$ with a cyclic group quotient.
Other technical conditions are also required.

For the network of Figure \ref{F:4CPG} the symmetry group is $\Gamma =\Z_2 \times \Z_2$, described in Section \ref{S:SCPG}.
The $H/K$ Theorem leads to a list of 11 pairs $(H,K)$, corresponding to
a total of $11$ gaits. Of these, four are `primary' gaits,
which can occur by generic Hopf bifurcation from the trivial steady state
with all legs stationary. These gaits are the (two-legged) {\em hop}, {\em walk}, (two-legged) {\em jump},
and {\em run}, see Table \ref{T:primary_gaits}. Again, Section \ref{S:SCPG}
provides further details.
In primary gaits, all nodes
have the same waveform, but with regular phase differences. (This remark
applies to the model; in reality the waveforms are only approximately the same.)
The remaining
seven `secondary' gaits involve either two or four different waveforms. Six
of them can be viewed as `mode interactions' between two primary gaits.

\begin{table}[!htb]
\begin{center}
\begin{tabular}{|c|c|c|c|c|}
\hline
gait & (1) left flexor & (2) left extensor & (3) right flexor & (4) right extensor\\
\hline
\hline
hop & $x(t)$ & $x(t)$ & $x(t)$ & $x(t)$ \\
walk & $x(t)$ & $x(t+\shf)$ & $x(t+\shf)$ & $x(t)$ \\
jump & $x(t)$ & $x(t+\shf)$ & $x(t)$ & $x(t+\shf)$ \\
run & $x(t)$ & $x(t)$ & $x(t+\shf)$ & $x(t+\shf)$ \\
\hline
\end{tabular}
\caption{Classification of primary gaits for bipedal locomotion, assuming
a CPG as in Figure \ref{F:4CPG}.}
\label{T:primary_gaits}
\end{center}
\end{table}

A rate model for this CPG is analysed in \cite{S14}, and the results
are summarised in Section \ref{S:RE4CPG}.

The main objective of this paper is to use the methods of
\cite{SW23a, SW23b} to investigate alternative
networks that generate the same patterns as the 4-node CPG in Figure \ref{T:primary_gaits}, 
but propagate them along arbitrarily long chains. One of these
networks is a `feedforward lift' of the 4-node CPG, Figure \ref{F:4CPGcascade} (top).
The other is {\em almost} a feedforward lift, except for lateral
connections, as in Figure \ref{F:4CPGcascade} (bottom). The top network is
slightly more tractable mathematically. The bottom one is biologically
more plausible and we analyse it using similar methods.

The key feature that we analyse is the {\em stability} of these patterns;
not just to synchrony-preserving perturbations, but also to
synchrony-breaking perturbations. We consider both
Floquet and Liapunov stability, as well as transverse stability of the
synchrony subspace; see Section \ref{S:STS} for a summary of these notions.
 
\begin{figure}[htb]
\centerline{%
\includegraphics[width=.8\textwidth]{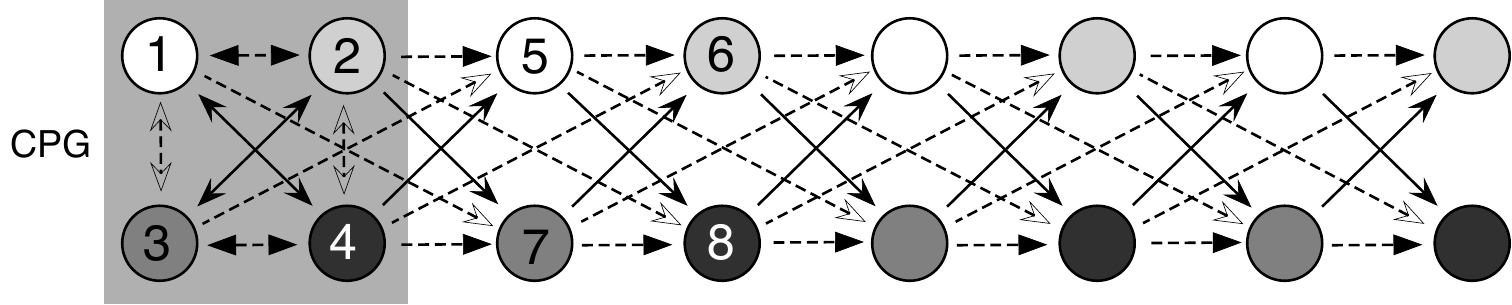} }
\vspace{.3in}
\centerline{%
\includegraphics[width=.8\textwidth]{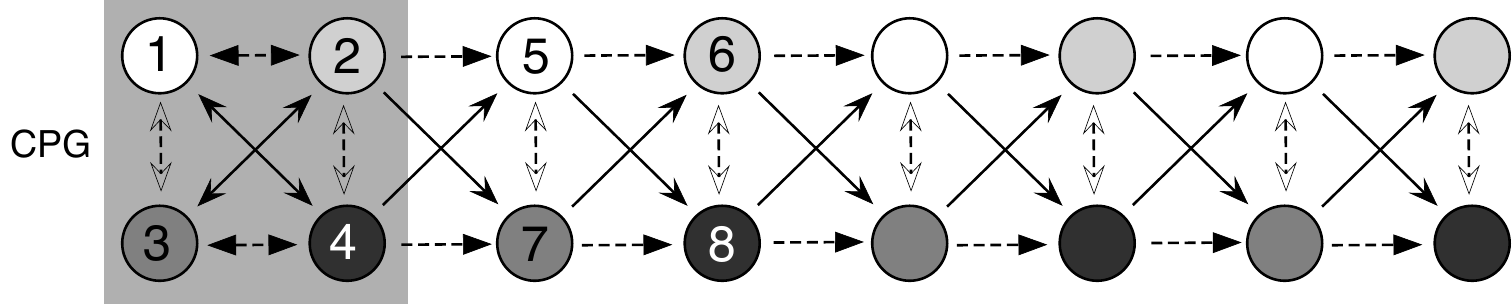} }
\caption{Two lifts of
Figure~\ref{F:4CPG}\protect, which therefore support the same rhythms. In each
subfigure, the top row of nodes corresponds to the left side of the animal
and the bottom row to the right side.
{\em Top}: Feedforward lift. {\em Bottom}: Lift in which all connections to the chain are
feedforward except for the lateral ones (vertical double-headed arrows).}
\label{F:4CPGcascade}
\end{figure}

\subsection{Synchrony and Phase Patterns}

Regular phase patterns are common in networks of dynamical systems
with suitable symmetries \cite{GS02,GS23,GSS88}. This suggests modelling
gait patterns using network dynamics \cite{CS93a,CS93b}.
In network dynamics, each network (labelled directed graph) has an associated set of 
{\em admissible} ODEs --- ordinary differential equations that are
compatible with the network structure in a precise technical sense.
In this paper we state explicitly the admissible ODEs for the networks
under discussion, so we do not repeat the general definition of
`admissible'. Full details are in \cite{GS23,GST05,SGP03}. There is a short summary in \cite{SW23a}
and a briefer one in \cite{SW23b}.

For modelling purposes, we follow a common convention in the
literature and adopt a strong definition of synchrony.
Two nodes are {\em synchronous}, for some admissible ODE, if their waveforms
(time series) are identical; they are {\em phase-synchronous} if their waveforms
are identical except for a phase shift (time translation).
These definitions are idealisations, but they open up a
powerful mathematical approach with useful implications. Real
systems can be considered as perturbations of
idealised ones, and much of the interesting structure persists in an appropriately
approximate form \cite[Section 8]{SW23b}.

A {\em synchrony pattern} partitions the nodes of the network
into sets that are synchronous, often called {\em clusters}. 
A {\em phase pattern} partitions the nodes of the network
into sets that are phase-synchronous, and also assigns specific phase shifts as
fractions of the overall period. These patterns are {\em rigid} if they persist after all
sufficiently small admissible perturbations of the ODE. In the phase-synchronous case
the period changes under perturbation, so we require
the phase shifts to be preserved when measured as fractions of the period. In the context of
symmetric dynamics, these `spatiotemporal' symmetries are encoded in
two subgroups $H$ and $K$ of the symmetry group $\Gamma$, where
$K$ is a normal subgroup of $H$ and $H/K$ is a finite cyclic group \cite{B01,GS02}.
The group $K$ determines which nodes are synchronous with each other; 
the group $H$ determines the
phase shifts when considered modulo $K$. In a network version,
$K$ becomes a so-called balanced colouring and $H$ is a group of symmetries
of the corresponding quotient network \cite{GS23}. 

The groups $H$ and $K$ for Table \ref{T:primary_gaits} are listed in \cite{PG06}
and are easily inferred from the table shown. In all cases $H = \Z_2 \oplus \Z_2$
and $K$ is equal to $H$ for the hop and otherwise is one of the three subgroups
that are cyclic of order 2. See Example \ref{ex:biped}.

\begin{example}\em
\label{ex:elephant_walk}
The walk gait of Figure \ref{F:ele_walk_outline} affords a simple example of a phase pattern. Denote the legs by
$LR, LF, RR, RF$ (left rear, left front, right rear, right front). Legs move in that sequence,
with successive $\sq$-period phase shifts. The synchrony pattern $K$ is trivial:
distinct legs do not synchronise. The symmetry group $H$ is cyclic of order 4,
permuting the legs in the cycle $(LR, LF, RR, RF)$. A generator of $H$ 
(move one place along the cycle) corresponds to a phase shift $T/4$ where $T$
is the overall period.

The bound, pace, and trot gaits (observed in other quadrupeds, \cite{B01,BG01,CS93a,G74,GSBC98,GSCB99}) also
have nontrivial synchrony/phase patterns. In the bound, the back legs move together and then
the front legs move together, so the synchrony clusters $K$
are $\{LR,RR\}$ and $\{LF,RF\}$. The symmetry group of the clusters is
cyclic of order 2, and the phase difference between the clusters 
is half the period.

The pace and trot are similar, but the synchrony clusters $K$
are $\{LR,LF\}$ and $\{RR,RF\}$ for the pace and $\{LR,RF\}$ 
and $\{RR,LF\}$ for the trot. Again the symmetry group of the clusters is
cyclic of order 2, and the phase difference between the clusters 
is half the period.
\end{example}

\subsection{Symmetries and CPGs}
\label{S:SCPG}

Phase patterns arise naturally in symmetric dynamical systems \cite{GSS88},
leading to the suggestion that gait CPGs should also have appropriate symmetries \cite{CS93b}.

\begin{example}\em
\label{ex:biped}
Pinto and Golubitsky \cite{PG06,S14} propose
the 4-node network of Figure~\ref{F:4CPG} as a schematic CPG
for biped locomotion. A similar model,
potentially with less symmetry but the same topology, appears in \cite{AKFKST12}.
The nodes represent neurons or, more realistically,
populations of neurons \cite{WC72}. Although a biped has two legs, the CPG has
four nodes. The muscle groups in each
leg are activated by two nodes: one flexor and one extensor.
This network has a symmetry group of order 4,
comprising the permutations
\beqn
&&{\rm id}\ = \Matrixc{1 & 2 & 3& 4 \\ 1 & 2 & 3 & 4}\qquad 
\rho = \Matrixc{1 & 2 & 3& 4 \\ 2 & 1 & 4 & 3}\qquad  \\
&&\sigma =\Matrixc{1 & 2 & 3& 4 \\ 3 & 4 & 1 & 2}\qquad 
\rho\sigma = \Matrixc{1 & 2 & 3& 4 \\ 4 & 3 & 2 & 1}
\eeqn
That is: the identity, swap left/right, swap flexor/extensor, swap diagonally.
These four permutations constitute the group $\Z_2 \times \Z_2$,
which corresponds naturally to the observed phase patterns.
\end{example}

CPG architectures of this kind have been explored for bipeds \cite{PG06,S14},
quadrupeds \cite{B01,CS93b,GSBC98,GSCB99,PG06,SJK90}, hexapods \cite{CS93a}, and arthropods \cite{GSCB99}, among others.
The main techniques are taken from symmetric dynamics: the symmetric Hopf bifurcation
theorem \cite{GSS88} and the $H/K$ theorem \cite{B01}. The CPGs that occur in these
papers generally have small numbers of neurons (or populations thereof). Typically
there are two neurons per leg, for mathematical reasons explained in   \cite{GSBC98,GSCB99}. A
possible biological interpretation is that one neuron (population) is required for 
extension and one for flexure of the corresponding muscle group.

\subsection{Classification of Phase Patterns}

The assumption of a symmetric CPG is natural in the context of dynamics 
with symmetry.
It is well known that symmetric dynamical systems can generate
rigid phase patterns, meaning that the phase shifts concerned remain
unchanged (as fractions of the period) after a small 
symmetry-preserving perturbation \cite{GSS88}. Rigidity is important in
biological applications, where precise model equations are seldom available,
so meaningful predictions should not depend on
fine details of the model. However,
 there are significant differences between a general symmetric dynamical system
and an admissible ODE for a symmetric network \cite[Chapter 16]{GS23}.
In particular, the network case permits a more general mechanism to generate
rigid phase patterns, without assuming that the network itself is
symmetric. Instead, the symmetry occurs in
a {\em quotient network} in which synchronous or phase synchronous nodes are identified.
We give a simple example in Section \ref{S:5node}; this network has only
trivial symmetry but it has Figure \ref{F:4CPG} as a quotient, which has
$\Z_2 \times \Z_2$ symmetry.
Thus these network architectures need not correspond directly to
physiological `wiring diagrams' (connectomes) in the animal, but could in principle be deduced
from wiring diagrams \cite{MBSS25}.

The rigid phase patterns that can occur in an admissible ODE for a network $\GG$
are characterised by the Network $H/K$ Theorem \cite[Theorem 17.19]{GS23}.
This theorem is a consequence of several highly plausible 
Rigidity Conjectures \cite{GS23}. These conjectures have
been proved for a wide variety of networks,
including all homogeneous and fully inhomogeneous networks \cite{GRW10,GRW12}.
However, those papers makes a tacit assumption that is not always valid \cite{GS23},
so the proof does not apply in full generality. The Rigidity Conjectures
have been proved for all networks under additional mild technical conditions \cite{S20overdet}.

Here $K$ is a balanced colouring of $\GG$, determining a rigid synchrony pattern,
and $H$ is a cyclic group $\Z_r$ of symmetries of the quotient network $\GG^K$ in which
synchronous nodes are identified. 
This kind of symmetry induces phase patterns
in which a generator of $\Z_r$ corresponds to a phase shift of $mT/r$ where $T$
is the period and $m$ is an integer between $0$ and $r-1$.
The network $H/K$ Theorem can be viewed as a catalogue of the possible 
phase patterns; any specific admissible ODE
`selects' patterns from this catalogue.

\subsection{Feedforward Lifts}

If a network $\mathcal{Q}$ is a quotient of a network $\GG$, we say that $\GG$
is a {\em lift} of $\mathcal{Q}$. Reversing the relation in this manner lets us start from
a network $\mathcal{Q}$ with certain dynamics, such as phase patterns,
and construct various lifts $\GG$
in which the corresponding clusters have the same dynamic behaviour
as the nodes of $\mathcal{Q}$. 
The Network $H/K$ Theorem describes a general mathematical scenario in which
a phase pattern generated by a simple symmetric network can be lifted to
a more complex asymmetric network.
The main aim of this paper is to investigate this possibility in the context of the
biped CPG of Figure \ref{F:4CPG}, seeking lifts that are biologically more plausible.
We focus on a crucial issue: the stability of the lifted state.
The basic construction used here is a {\em feedforward lift}, see Section \ref{S:FFL}.
This construction is discussed in \cite{SW23a} from a general 
mathematical viewpoint, and \cite{SW23b} 
analyses a series of `toy' examples based on standard model neurons,
again focusing on stability. 

The structure of a feedforward lift is motivated by animal physiology.
In many animals, oscillations propagate along
a chain of neurons, which oscillate in synchrony or phase-synchrony.
This architecture generates propagating time-periodic patterns.
Synchrony corresponds to a standing wave; phase synchrony to a travelling wave.
Legged locomotion in animals is the key example for this paper.
 Other examples are locomotion in {\em Drosophila} larvae \cite{GBEE13}
 and the nematode {\em Caenorhabditis elegans} \cite{OIB21,SSSIT21},
 peristaltic waves in
the intestine \cite{CBT08,CTB13,F06,Gr03,KF99}, and the heartbeat of the leech
\cite{CNO95,CP83,TS76,WHC04}.

Such patterns are found elsewhere: for instance
to propel continuum robots for medicine 
\cite{JC19,SOWRK10,ZHX20}
or exploration \cite{MWXLW17}. Gene regulatory networks (GRNs) also exhibit
synchrony patterns, and
chains of regulatory links can act as delay lines (Leifer \etal\ \cite{LMRASM20}) leading to phase patterns
The states of GRNs are usually equilibria, but periodic oscillations 
are also found \cite{PSGB10}.
Elowitz and Leibler \cite{EL00} found phase-synchronous patterns in the 
repressilator, a synthetic genetic circuit. In an idealised model these patterns
are a consequence of $\Z_3$ symmetry \cite{MBSS25,SRM24}.

\subsection{Example: the Leech Heartbeat}

Figure \ref{F:leech} shows a neural circuit for the heartbeat 
of the medicinal leech {\em Hirudo medicinalis} based on observations 
in Kristan \etal\ \cite{KCF05}. A CPG  (shaded) with 14 interneurons (HN) generates 
rhythmic signals that propagate along two chains of heart excitatory neurons (HE).
This network is a feedforward lift of the CPG, except for the HE nodes 3--7,
which have slightly different inputs from HE nodes 8 onwards.

A curious feature of the leech's heartbeat is that contractions occur as a standing wave
along one side of the animal, but as a travelling wave along the other. After
about 20--40 heartbeat cycles the two patterns swap sides \cite{CP83,KCF05}.
We do not investigate this kind of switching here; see \cite{BP04,PS08b} for
two possible explanations.

\begin{figure}[h!]
\centerline{%
\includegraphics[width=0.3\textwidth]{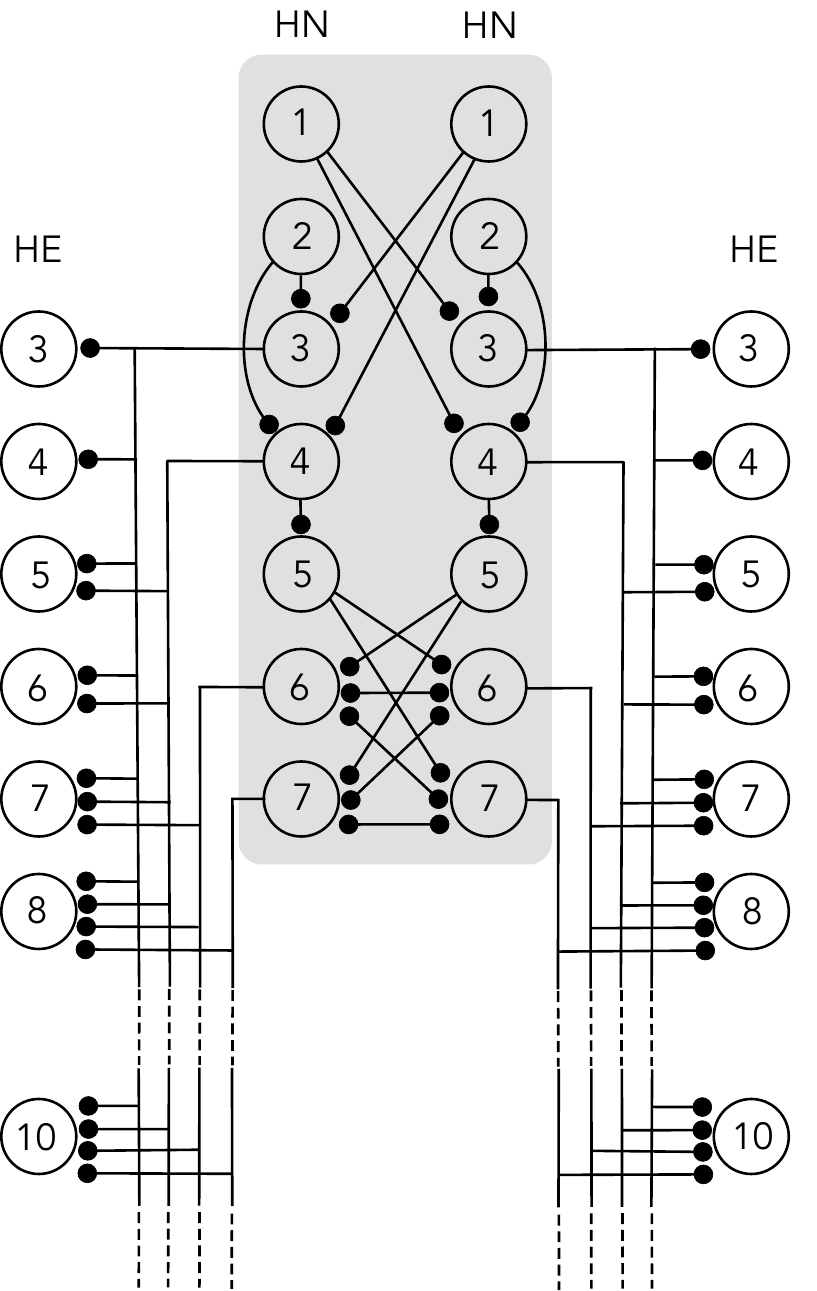}
}
\caption{Neural circuit for the heartbeat of the medicinal leech, after \cite{KCF05}.
Nodes in the shaded box form the CPG. HN = heart interneuron, HE = heart excitatory neuron.}
\label{F:leech}
\end{figure}

\subsection{Asymmetric CPGs can Generate Phase Patterns}
\label{S:5node}

The feedforward lifts in Figure \ref{F:4CPGcascade} preserve the $\Z_2 \times \Z_2$ symmetry
of the CPG, but asymmetric networks can also generate rigid phase
patterns via the same `quotient network' mechanism. Figure \ref{F:5node}
shows a 5-node network with trivial symmetry; it has one type of node
and three types of arrow. The map
\beqn
&&\varphi: \{1,2,3,4,5\} \to \{1,2,3,4\} \\
&& \varphi(1) = \varphi(5) = 1 \qquad \varphi(2) = 2 \qquad \varphi(3) = 3 \qquad \varphi(4) = 4
\eeqn
is a quotient map (\cite[Section 10.7]{GS23}), also called a graph fibration
\cite{BV02}, since it preserves node types, arrow types, and input sets
up to isomorphism. It therefore supports all types of phase pattern
associated with the $\Z_2 \times \Z_2$ symmetry of Figure \ref{F:4CPG},
subject to the condition that nodes
1 and 5 are synchronous.
However, the symmetry group
of the  $5$-node network of Figure \ref{F:5node} is trivial.
Specifically, there is precisely one unidirectional arrow 
of each type, namely $2 \to 5, 3 \to 5$, and $4 \to 1$, so the heads and tails
of these arrows are fixed by any symmetry. But these account for all five nodes.

In principle, the extra node can cause transverse instability, so
the parameter values for which these lifted gaits are stable
might differ from those for the original ones. Simulations show that
they can be stable for the 5-node network, for suitable parameters.
We have not investigated this issue further.

\begin{figure}[htb]
\centerline{%
\includegraphics[width=.3\textwidth]{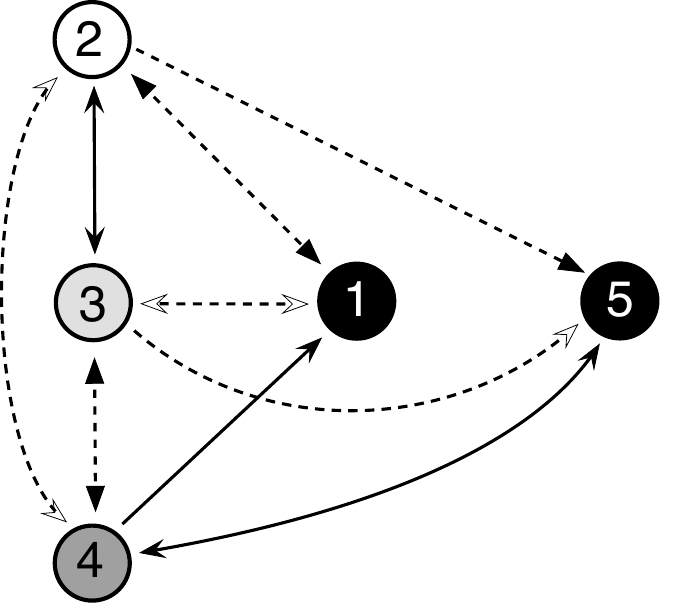} 
}
\caption{A $5$-node lift of Figure \ref{F:4CPG} with trivial symmetry. The synchrony pattern
illustrated, with clusters $\{1,5\}, \{2\}, \{3\},\{4\}$, determines a quotient network
with $4$ nodes, isomorphic to Figure \ref{F:4CPG}. Therefore the $5$-node
network supports the same periodic states (though not necessarily with the same stabilities) as the CPG in Figure \ref{F:4CPG}, but with nodes
1 and 5 are synchronous.}
\label{F:5node}
\end{figure}

\subsection{Comparison of Different Neuron Models}

Many different types of neuron model have been studied: Boolean switches, 
Pitts--McCulloch networks, PDEs,
integro-differential equations, delay differential equations, stochastic models\ldots 
\ Surveys and further details can be found in many sources, for example \cite{GK02,GKNP14,I10,LOHR99,NKMS25,RE98,wiki25}.
We comment briefly on five standard types of model neuron that are defined by ODEs. 

The first ODE model of a neuron is arguably that of Lapicque \cite{L07} in 1907.
Like many later ones, it was based on an electronic analogy.
A biologically more realistic ODE model appeared in the Nobel prizewinning
work of Hodgkin and Huxley \cite{HH52}. The objective was to model the initiation
and propagation of action potentials in the giant axon of the
longfin inshore squid {\em Doryteuthis pealeii}, and to obtain
a quantitative fit to data obtained from voltage clamp experiments.
Their model incorporates ionic mechanisms and is based on cable
theory. A major feature was an accurate prediction of the
speed of propagation of the nerve impulse. The Hodgkin--Huxley equations
are mathematically complex and cumbersome, 
involving four dynamic variables and numerous parameters, and
are best suited for numerical simulation.

To gain insight into key aspects of the dynamics generating a neuron impulse,
FitzHugh \cite{F61} and Nagumo \etal\ \cite{NAY62} introduced a mathematically
simpler model that preserves some key features of the Hodgkin--Huxley
model. It is a relaxation oscillator, generalising the classical
Van der Pol oscillator.
This model has only two variables, making it amenable to
standard methods of analysis such as phase-plane diagrams. It provides
geometric mechanisms for the activation and deactivation of a single pulse, but it
sacrifices quantitative accuracy.

Morris and Lecar \cite{ML81} proposed an alternative simplification
of the Hodgkin--Huxley model to represent voltage clamp experiments on
the barnacle {\em Balanus nubilus}. Like the FitzHugh--Nagumo model, 
their equation has two dynamic variables.

Hindmarsh and Rose \cite{HR84} devised yet another simplification of
the Hodgkin--Huxley model, designed to represent both single pulses and
bursting. It has three dynamic variables: the membrane potential and two 
representing transport through ion channels. It allows a more diverse
range of dynamic behaviour, including chaos, and provides qualitative versions
of the main types of dynamic behaviour found in experiments.

Wilson and Cowan \cite{WC72} formulated equations for populations of
excitatory and inhibitory neurons, whose mathematical formulation is
well suited to modelling networks of such neurons. Their model proposes
a pair of integro-differential equations for the time-evolution of
the numbers of excitatory and inhibitory neurons. A coarse-grained
version leads to a type of ODE known as a
{\em rate model} \cite{ET10}, discussed in Section \ref{S:RM}.
Here the state of each neuron in the network is represented by two dynamic variables.
The {\em activity variable}  
receives a linear combination of inputs from all other variables,
whose coefficients correspond to a weighted adjacency matrix for the network.
This combination of inputs is modulated by a
sigmoidal gain function. The {\em fatigue variable} is internal to
the neuron, and causes its activity to die down again. See Section \ref{S:RM}.

In this paper, as proof of concept, we employ rate models.
With the likely exception of Section \ref{S:LFRM} on Liapunov functions,
which relies on the specific form of rate model gain functions,
similar analyses based on alternative neuron models could be carried out in
much the same manner. Indeed, we have done this for a 
simpler network in \cite{SW23b}.

\subsection{Summary of Paper}

Section \ref{S:FFL} recalls from \cite{SW23a} the concept of
a feedforward lift network. We discuss three examples: a `toy' model to
illustrate mathematical features, and two lifts of Figure \ref{F:4CPG} 
that generate the same phase patterns as Figure \ref{F:4CPG}
and thus might equally well model biped locomotion while propagating
signals along chains of short-range connections.
We summarise three notions of stability for periodic orbits --- Liapunov, asymptotic,
and Floquet stability. We discuss `transverse' versions of these
stability conditions, appropriate to feedforward lifts, along with
a fourth form of stability: transverse stability of the synchrony subspace.
We state three basic stability theorems from \cite{SW23a}.

Section~\ref{S:RM} specialises the analysis to rate (or Wilson--Cowan) models.
A similar analysis could be performed using other neuron models, and \cite{SW23b}
suggests that stable travelling waves would be likely.
The relevant stability notions are recalled in Section \ref{S:LS}.
First, we show that the synchrony subspace of any rate model is
transversely stable. By \cite{MY60} this does not imply Floquet stability,
but it suggests that Floquet stable lifted states should be common.
In Section \ref{S:LFRM} we construct two Liapunov functions that ensure
transverse Liapunov stability for rate models when parameters
lie in specified ranges. One of them also guarantees Floquet stability
over a slightly smaller range.
We simulate the lifted periodic state numerically for four parameter sets
that satisfy the stability conditions, and compute CPG and
transverse Floquet multipliers numerically.

Section~\ref{S:ABL} applies the theory of feedforward lifts and
transverse stability to two models of biped locomotion, shown in Figure \ref{F:4CPGcascade}.
These models are lifts of the
CPG of figure \ref{F:4CPG} proposed in \cite{PG06}, itself
derived from \cite{GSBC98,GSCB99}. The lifts propagate 
phase-synchronous signals along an arbitrarily long chain with
only short-range connections. One model uses a feedforward lift to replace the
4-node CPG by an arbitrarily long feedforward chain driven by that CPG,
which might be more plausible physiologically. The second model is
{\em almost} a feedforward lift, except for lateral connections. Again, this
topology is plausible.
The synchrony subspace for the first model is transversely stable
by Theorem \ref{T:Ratestable}. We give analytic
conditions for transverse stability for the second model.
We compute transverse eigenvalues numerically along the periodic orbit.
We also compute transverse Floquet multipliers and compare these two stability notions.

Finally, we summarise the main conclusions in Section~\ref{S:C}.

\section{Feedforward Lifts}
\label{S:FFL}


A feedforward lift \cite{SW23a} of a given network is a lift with two components:
\begin{itemize}
\item[\rm (a)]
The original network, now referred to as a {\em CPG}.
\item[\rm (b)] A {\em chain}, or a more complex branching structure, which receives
signals from the CPG but does not send signals to it. Nodes in the chain
are connected in a feedforward (also called `acyclic') manner, and input arrows are arranged
so that they receive the same signals as suitable nodes in the CPG.
\end{itemize}

The formal definition of a feedforward lift is given in \cite[Definition 3.4]{SW23a}. We do not repeat it here
since the feedforward lifts in this paper are described explicitly. However, it underpins
the general stability theorems that we require.
 The main point of \cite{SW23a} is that any feedforward lift of a given network
can propagate time-periodic phase-related signals in a natural manner
along a chain or even a branching tree, and these signals are often stable
(even to synchrony-breaking perturbations). 

\begin{example}\em
\label{ex:7node_example}

Figure~\ref{F:7nodeFFZ3}
is a simple `toy' example used in \cite{SW23a} to illustrate the main ideas,
and initially we use it here for simplicity.

\begin{figure}[h!]
\centerline{%
\includegraphics[width=0.6\textwidth]{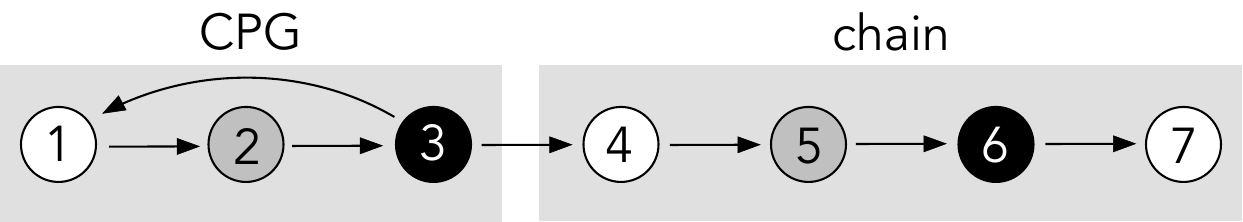}
}
\caption{A 7-node network with
one node-type and one-arrow type. The colouring (black, grey, white) is balanced.
This network naturally propagates signals such that nodes of the same colour are in synchrony, 
and successive nodes are phase-shifted by $\sot$ of the period. The CPG $\{1,2,3\}$
has cyclic group symmetry $\Z_3$.}
\label{F:7nodeFFZ3}
\end{figure}

The CPG is the subnetwork with nodes $\{1,2,3\}$
and the arrows connecting them; the chain comprises the remainder of the network.
Intuitively, nodes 1 and 4 receive the same signal from node 3 and have the same internal dynamics,
so they can synchronise. Then nodes 2 and 5 receive synchronous signals,
so they can synchronise. Similarly node 6 can synchronise with node 3, node 7
with node 4, and so on for longer chains. More formally, the colouring shown is {\em balanced}:
nodes of the same colour have inputs that are colour-isomorphic. See \cite[Chapters 8, 10]{GS23}.

The CPG $\{1,2,3\}$
has cyclic group symmetry $\Z_3$, so a natural phase pattern for a period
oscillation involves a $\sot$-period phase shift between successive nodes \cite{GS02,GS23,GRW12,GSS88,SP08}. The synchrony pattern ensures 
that these phase shifts are replicated along the chain. 
Figure \ref{F:Rate_time_series} in Section \ref{S:S7NC} shows numerical examples
of such phase patterns.
\end{example}

A more complex example of a feedforward lift is shown in Figure \ref{F:4CPGcascade} (top). The very similar topology of
Figure \ref{F:4CPGcascade} (bottom) is not a feedforward lift in the strict sense,
but more general results apply to it. We introduce these
two networks because the network of Figure~\ref{F:4CPG} need not be taken literally
as the exact CPG architecture for a biped. In particular, {\em any} lift of that
network has the same dynamics when restricted to a suitable synchrony subspace;
that is, synchronous clusters have the same waveforms and
phase relations as the nodes of the 4-node CPG. So the chains of repeating modules
in Figure \ref{F:4CPGcascade} propagate phase patterns to more distant
parts of the animal via exclusively short-range connections.
Figure \ref{F:5node} shows a non-feedforward lift, to which similar remarks
apply. However, when the lift is
feedforward  there is a systematic way to analyse stability:
\cite[Theorem 5.4]{SW23a} shows that in that case,
when transverse Floquet multipliers are stable,
the patterns concerned have the same stabilities as they do for the CPG.
(This is true for each of the main three notions of stability of a periodic state:
asymptotic, Liapunov, or Floquet.) Indeed, the lifted dynamics can
be stable even when some connections are not feedforward.

Moreover, transverse Floquet stability of the lifted periodic state
for one extra `module' in a feedforward lift
implies transverse Floquet stability for any number of extra modules.
As suggested in \cite{SW23b}, this feature may offer evolutionary advantages.

\subsection{Stability and Transverse Stability}
\label{S:STS}

As already mentioned, a crucial issue is the stability of the signals, which is necessary
for them to model reasonable biology.
In order for a propagating
signal created by a feedforward lift to be stable --- asymptotically, Liapunov, or Floquet --- the
original CPG periodic orbit must be stable (in the same sense)
to synchrony-preserving perturbations and to 
synchrony-breaking perturbations. The first condition is equivalent
to stability of the periodic orbit within the CPG state space, since
for a feedforward lift this can be identified with the synchrony subspace concerned.
The second condition can be stated as a form of {\em transverse stability},
and this condition applies unchanged to all such chains and trees. 
We summarise these ideas in Section \ref{S:SMR}.

When these stability conditions hold, a feedforward lift
propagates phase patterns of periodic states
in a robust (indeed, technically `rigid') manner, making them
suitable for locomotion models. In \cite{SW23b} we examined four standard
neuron models (FitzHugh--Nagumo, Morris--Lecar, Hindmarsh--Rose,
and Hodgkin--Huxley) from this point of view, for the `toy' 7-node network of Figure \ref{F:7nodeFFZ3},
showing numerically that
the stability conditions are often satisfied.

The present paper focuses on a fifth type of model: Wilson--Cowan
(or rate model) neurons. It has two main aims. One is to prove analytically
that such models are transversely stable, either in the sense of Liapunov stability
or more strongly Floquet stability, if certain conditions on parameters are
satisfied. The other is to study stability if propagating signals
for two feedforward lifts for a CPG for biped locomotion introduced in \cite{SW23a}.
We  use both analytic and numerical methods to do this.

\subsubsection{Three Stability Notions}
\label{S:LS}

We recall three stability notions for equilibria and periodic orbits, each stronger than the previous one. For further
information see \cite[Chapter 4]{MLS93} and \cite{L64,LL61,L92}.
Let $\|\cdot\|$ be any norm on $\R^n$.
In this paper we use the Euclidean norm unless otherwise stated.
All norms on $\R^n$ are equivalent.

\begin{definition}\em
\label{D:LAEstab}
Let $x(t)$ be an orbit of an ODE $\dot x = f(x)$. Then an
equilibrium $a_0$ is:

(a) {\em Liapunov stable} if,
for every $\eps > 0$, there exists $\delta>0$ such that if $\|x(0)-a_0\| < \delta$
then $\|x(t)-a_0\| < \eps$ for all $t > 0$. 

(b)  {\em Asymptotically stable} if it is Liapunov stable,
and in addition $\delta$ can be chosen so that $\|x(t)-a_0\| \to 0$ as $t \to +\infty$.

(c) {\em Exponentially stable} if there is a neighbourhood $V$
of $a_0$ and constants $K, \alpha > 0$ such that $\|x(t)-a_0\| < K \ee^{-\alpha t}$
for all $x(0) \in V$. 

(For some norm, not necessarily the Euclidean one, we can assume $K=1$.)
\end{definition}

Exponential stability implies asymptotic stability, which in turn implies
Liapunov stability. Neither converse is valid in general. {\em Linear stability}
(all eigenvalues of the Jacobian at $a_0$ have negative real part)
is equivalent to exponential stability \cite{HS74}.

We can transfer these stability conditions 
to a periodic orbit (trajectory) $\{a(t)\}$ via a Poincar\'e map.
An alternative, more classical, approach is to define $y(t)=x(t)-a(t)$. 
Then the  {\em system of deviations}
\begin{equation}
\label{E:SoD}
\dot y = f(y+a(t))-\dot a(t)
\end{equation}
is a non-autonomous ODE with a fixed
equilibrium $y=0$. The orbit $\{a(t)\}$ is defined to be stable, 
in any of the senses (a), (b), (c)of Definition \ref{D:LAEstab}, if
this equilibrium is stable in the same sense. 

Since $y(t) = x(t)-a(t)$ we can restate the definition for Liapunov stability of an orbit
(in particular, a periodic one):

\begin{definition}\em
\label{D:LSorbit}
The orbit $\{a(t)\}$ is {\em Liapunov stable}
if, for any $\eps > 0$, there exists $\delta >0$ such that whenever
$\|x(0)-a(0)\| < \delta$ we have $\|x(t)-a(t)\| < \eps$ for all $t \geq 0$.
\end{definition}

There are corresponding definitions of asymptotic and exponential stability
for orbits. Often these are called orbital asymptotic and exponential stability.
Again, exponential stability implies asymptotic stability, which in turn implies
Liapunov stability. Exponential stability is equivalent to Floquet stability \cite{HS74}.

\subsubsection{Liapunov Functions}

Liapunov stability can sometimes be proved using a {\em Liapunov function}
$L(x)$. 

\begin{definition}\em
\label{D:LiapFunc}
Let $a_0$ be an equilibrium of an ODE on $\R^n$. Then a (local)
{\em Liapunov function} is a map $L:N \to \R^n$, where $N$
is a neighbourhood of $a_0$, such that:

(a) $L(x) \geq 0$ for all $x$ near $a_0$.

(b) $L(x) = 0$ if and only if $x=a_0$.

\noindent
A further condition strengthens this notion:

(c) $\frac{\dd}{\dd t} L(x(t)) \leq 0$ for all $x(0) \in N, t \geq 0$, 
where $x(t)$ is the forward orbit of $x(0)$.

\noindent 
A more explicit bound implies exponential stability:

(d) There is a constant $\alpha > 0$ such that 
$\frac{\dd}{\dd t} L(x(t)) \leq -\alpha L(x(t)) $ for $t\geq0$
\end{definition}

{\em Liapunov's Second Method} \cite{L64,LL61} is the observation that the existence
of a Liapunov function satisfying (a), (b), and (c)
implies that $A$ is asymptotically stable. If only
(a) and (b) hold, it is Liapunov stable. If (d) also holds, it is exponentially stable.

\subsubsection{Transverse Stability}

For an admissible ODE of a feedforward lift, 
the three notions of stability apply directly to the CPG dynamics, ignoring
the chain that it drives. The structure of a feedforward lift implies that
the CPG dynamics can be identified naturally with (is `conjugate' to) the dynamics of
the corresponding synchronous clusters. We can therefore split the
stability conditions into two parts: stability to perturbations that preserve synchrony,
and to perturbations that break synchrony. The first part corresponds to stability
of the CPG dynamics; the second is what we have called transverse stability.

In \cite{SW23a} we define transverse asymptotic, Liapunov, and Floquet
stability for a periodic orbit. The idea is to consider the system of deviations
\eqref{E:SoD} and require the corresponding stability condition for the
fixed point defined by the periodic orbit $\{a(t)\}$ and lets us focus on
the transverse dynamics. Linearising this equation 
leads to Floquet theory. 

We prove that for each of the three types of
stability in Definition \ref{D:LAEstab}, the lifted signal $\{\tilde a(t)\}$ is stable if and only if the periodic orbit $\{a(t)\}$ 
of the CPG is stable
and it satisfies the corresponding transverse stability condition. We emphasise
that this condition depends only on the CPG dynamics, although it differs from
stability of $\{a(t)\}$ itself. This fact is a consequence of the combination of 
the feedforward structure with the imposed phase pattern.

Transverse Liapunov stability is defined in \cite[Section 5.5]{SW23a},
and a transverse version of asymptotic stability is also referred to there. It is proved in
\cite[Theorem 5.4]{SW23a} that the lifted periodic orbit $\{\tilde a(t)\}$ 
is Floquet stable if and only if $\{a(t)\}$ is Floquet stable on the CPG state space
 and $\{\tilde a(t)\}$ is transversely Floquet stable.
Analogous results hold for Liapunov and asymptotic stability. We state
the relevant theorems in Section \ref{S:SMR}.

\subsubsection{Transverse Stability of the Synchrony Subspace}

A different notion of transverse stability is discussed in \cite[Section 6]{SW23a}.
Here we consider the dynamic transverse to the synchrony subspace
at each point. Feedforward structure gives the Jacobian a block-triangular
form, with one block for the CPG and diagonal blocks for successive nodes
along the chain. The synchrony subspace is {\em transversely stable}
if these diagonal blocks have eigenvalues with negative real parts.

It is tempting to assume that this form of transverse stability should imply
transverse Floquet stability. This is true if node spaces are 1-dimensional \cite[Theorem 6.4]{SW23a}.
However, the Markus-Yamabe counterexample \cite{MY60} 
and \cite[Example 6.3]{SW23a} show that
it is false in general. Despite this, it remains a useful heuristic. We
compared transverse stability with transverse Floquet stability in \cite{SW23b}
for standard neuron models in the network of Figure \ref{F:7nodeFFZ3}.

\subsection{Stability Theorems for Feedforward Lifts}
\label{S:SMR}

We summarise the main stability results of  \cite{SW23a}, with minor changes
to conform to this paper.

The first \cite[Theorem 5.4]{SW23a} is a necessary and sufficient condition for Floquet stability
of a feedforward lift:

\begin{theorem}
\label{T:FFStab}
Let $\{\tilde a(t)\}$ be a feedforward lift of the periodic orbit $\{a(t)\}$
on the CPG. Then:

{\rm (a)} The Floquet multipliers for $\{\tilde a(t)\}$ are the
Floquet multipliers for $\{a(t)\}$ on the CPG, together with the transverse Floquet multipliers
for all nodes of the chain.

{\rm (b)} The transverse Floquet multipliers for any node of the chain are the
same as those for a synchronous node in the CPG.

{\rm (c)} $\{\tilde a(t)\}$ is stable on $P$ if and only if
$\{a(t)\}$ is stable in the CPG state space, and all transverse Floquet multipliers 
have absolute value $<1$ for every node in the CPG.
\end{theorem}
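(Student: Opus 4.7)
The plan is to exploit the defining feature of a feedforward lift, namely that signals from the chain do not feed back to the CPG, to block-triangularise the variational equation along $\{\tilde a(t)\}$ and then read off Floquet multipliers from the diagonal blocks.

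First I would write the admissible ODE for the lift in the form $\dot x_{\rm CPG}=F(x_{\rm CPG})$, $\dot x_v = f(x_v, x_{p(v)})$ for each chain node $v$, where $p(v)$ denotes the tuple of input nodes of $v$ and (by the definition of a feedforward lift recalled in Section \ref{S:FFL}) those inputs all lie strictly earlier in the feedforward order. Linearising along $\tilde a(t)$ and ordering coordinates so that CPG variables come first, followed by chain nodes in feedforward order, the Jacobian $J(t)=D\tilde F(\tilde a(t))$ becomes block lower triangular: the top-left block is $DF(a(t))$, and each subsequent diagonal block is $D_1 f(\tilde a_v(t),\tilde a_{p(v)}(t))$, i.e.\ the derivative of $f$ with respect to its \emph{own} node variable, evaluated along the lifted trajectory. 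The fundamental solution $\Phi(t)$ of $\dot \Phi = J(t)\Phi$, $\Phi(0)=I$, inherits this block lower triangular structure, so the monodromy matrix $\Phi(T)$ is block lower triangular with diagonal blocks $\Phi_{\rm CPG}(T)$ and, for each chain node $v$, a block $\Psi_v(T)$ solving $\dot \Psi_v = D_1 f(\tilde a_v(t),\tilde a_{p(v)}(t))\Psi_v$, $\Psi_v(0)=I$.

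Part (a) then follows immediately: the spectrum of a block triangular matrix is the union (with multiplicity) of the spectra of its diagonal blocks, so the Floquet multipliers of $\{\tilde a(t)\}$ are those of $\{a(t)\}$ together with the eigenvalues of each $\Psi_v(T)$; by the definition of transverse Floquet multipliers in Section \ref{S:STS}, the latter are precisely the transverse Floquet multipliers at $v$. For part (b), observe that if $v$ is synchronous with some CPG node $w$ (in the sense determined by the feedforward lift structure) then $\tilde a_v(t) = a_w(t)$ and the input signals $\tilde a_{p(v)}(t)$ agree with those driving $w$ in the CPG, because the lift is built so that corresponding input sets carry identical time series. Consequently $D_1 f$ evaluated along the lifted orbit at $v$ equals $D_1 f$ evaluated at $w$ for all $t$, the two variational equations coincide, and hence $\Psi_v(T)$ and the analogous transverse monodromy block at $w$ have the same eigenvalues.

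Part (c) is then a direct consequence of the standard characterisation of Floquet stability (all non-trivial multipliers lying strictly inside the unit disc) applied to the list of multipliers from (a) and identified via (b): $\{\tilde a(t)\}$ is stable precisely when its CPG multipliers certify stability of $\{a(t)\}$ in the CPG state space \emph{and} the transverse multipliers at every CPG node have modulus $<1$. The main obstacle, and the only genuinely non-formal step, is verifying carefully that the feedforward lift construction of \cite{SW23a} really does produce the claimed block structure and the identification of $D_1 f$ at $v$ with $D_1 f$ at its synchronous CPG node; this requires unpacking the admissibility conditions (preservation of node types, arrow types, and input sets up to isomorphism) to confirm that the internal-variable derivative of the admissible field at $v$ depends only on $\tilde a_v(t)$ and the (CPG-determined) input signal, not on the identity of $v$ as a chain node. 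Once that bookkeeping is done, the Floquet argument itself is essentially linear algebra plus the standard ODE fact that block triangularity of $J(t)$ propagates to $\Phi(t)$.
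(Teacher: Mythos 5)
Your proposal is correct and follows essentially the same route as the source: the paper does not reprove Theorem~\ref{T:FFStab} but cites \cite[Theorem 5.4]{SW23a}, and the mechanism it describes there --- feedforward structure forcing a block-triangular Jacobian whose diagonal blocks give the CPG and transverse multipliers, with synchrony identifying each chain block with that of its synchronous CPG node --- is exactly your argument. The one point worth making explicit in part (c) is the exclusion of the trivial multiplier $1$ tangent to the orbit, which you implicitly handle by restricting to non-trivial multipliers.
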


The second  \cite[Theorem 7.1]{SW23a} adds further detail for phase patterns:

\begin{theorem}
\label{T:TWtranseigen}
Assume that the CPG $\GG$ has nodes $\CC = \{1, \ldots, m\}$ 
with a cyclic automorphism group $\Z_k = \langle \alpha \rangle$,
such that $n=mk$ and $\alpha$ acts like the cycle $(1\, 2\, \ldots\, k)$
on all of its orbits on $\GG$. Let $\{a(t)\}$ be a $T$-periodic solution of an admissible ODE
with discrete rotating wave phase pattern.
Choose a set $\MM$ of orbit representatives.
Let $\widetilde{\GG}$ be obtained by lifting appropriate copies of translates 
$\MM_{k+1}, \MM_{k+2}, \ldots, \MM_l$ of this set by $\Z_k$. Then 

{\rm (a)} The periodic state $\{a(t)\}$ on $\GG$ lifts to a $T$-periodic travelling wave
state $\{\tilde{a}(t)\}$ for $\widetilde{\GG}$ with phases corresponding to the copies
$\MM_{k+1}, \MM_{k+2}, \ldots, \MM_l$ of $\MM$.

{\rm(b)}
The Floquet exponents of $\{\tilde{a}(t)\}$ (evaluated at any point) are 
those on the module $\MM$, together with the transverse
Floquet exponents for $\MM$.

{\rm(c)} If the Floquet exponents  and
the transverse Floquet exponents on $\MM$ have negative real part,
then $\{\tilde{a}(t)\}$ is stable.
\end{theorem}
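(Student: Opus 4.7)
The plan is to combine Theorem~\ref{T:FFStab} with the $\Z_k$-equivariance of the CPG dynamics to reduce all spectral quantities to the orbit-representative module $\MM$. First, for part~(a), I would verify that the construction of $\widetilde{\GG}$ is designed so that the partition of its nodes into the $\Z_k$-orbits of $\GG$ together with their lifted translates $\MM_j$ forms a balanced colouring: by hypothesis, each lifted $\MM_j$ has input arrows whose tails are colour-isomorphic to those of the corresponding $\Z_k$-translate of $\MM$ inside $\GG$. On the synchrony subspace associated with this colouring, the admissible ODE for $\widetilde{\GG}$ restricts to a system conjugate to the admissible ODE for $\GG$. The rotating-wave periodic orbit $\{a(t)\}$ therefore embeds into $\widetilde{\GG}$ as a $T$-periodic state $\{\tilde a(t)\}$ in which each $\MM_j$ carries the waveform of $\MM$ shifted in phase according to its $\Z_k$-translate, yielding the discrete travelling wave asserted in~(a).

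For part~(b), I would apply Theorem~\ref{T:FFStab}(a)--(b) to $\widetilde{\GG}$ and then reduce the CPG spectrum to one on $\MM$. Theorem~\ref{T:FFStab}(a) produces the Floquet multipliers of $\{\tilde a(t)\}$ as those of $\{a(t)\}$ on $\GG$ together with the transverse multipliers at each chain node, and Theorem~\ref{T:FFStab}(b) identifies the transverse multipliers at any chain node with those at a synchronous CPG node. To pass from $\GG$ down to $\MM$, I would exploit the fact that the variational equation along the rotating wave $\{a(t)\}$ is $\Z_k$-equivariant, because $\{a(t)\}$ reproduces under $\alpha$ up to a time shift of $T/k$. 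Its monodromy therefore splits according to the characters of $\Z_k$, and each isotypic block is naturally identified with the variational operator on $\MM$ twisted by a $k$-th root of unity; the same equivariance applied to the transverse linearisation reduces the transverse multipliers to those of $\MM$. Converting multipliers to exponents via $\mu=\ee^{\lambda T}$ gives the statement of~(b).

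Part~(c) then follows immediately: negative real parts of the Floquet and transverse Floquet exponents on $\MM$ translate to Floquet multipliers of absolute value strictly less than~$1$ at every node of the CPG, so Theorem~\ref{T:FFStab}(c) yields stability of $\{\tilde a(t)\}$.

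I expect the principal obstacle to lie in step~(b): setting up the $\Z_k$-equivariant decomposition of the monodromy along $\{a(t)\}$ carefully enough that both the in-plane and transverse Floquet data are genuinely identified with those of $\MM$, rather than merely being conjugate after some unspecified change of basis. The phrase ``evaluated at any point'' in~(b) becomes important here, since the monodromy over one period $T$ starting from a node in $\MM$ walks through all $k$ time-shifted copies before closing up, and one must track how this time shift interacts with the character twist so that the equality of spectra is exact and base-point independent.
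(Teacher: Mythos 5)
The first thing to say is that the paper does not prove this theorem: it is quoted, with minor rewording, from \cite[Theorem 7.1]{SW23a}, so the only available comparison is with the intended argument there. Judging from how the result is deployed in Section~\ref{S:ABL} (``if the periodic orbit of the CPG is Floquet-stable and the lifted orbit is transversely Floquet-stable, the lifted periodic orbit is Floquet-stable''), that argument is exactly the reduction to Theorem~\ref{T:FFStab} that you propose. Your part~(a) (balanced colouring, conjugacy of the restricted flow with the CPG flow, phases inherited from the rotating wave) and your deduction of~(c) from~(b) via Theorem~\ref{T:FFStab}(c) are correct and surely match the source.

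The gap is in your step~(b). You assert that the variational equation along $\{a(t)\}$ is $\Z_k$-equivariant and that ``its monodromy therefore splits according to the characters of $\Z_k$''. Neither claim holds as stated. Since $a(t)$ is not fixed by $\alpha$ but satisfies $\alpha\, a(t)=a(t+T/k)$, equivariance of the vector field gives only $\alpha\,\DD f(a(t))\,\alpha^{-1}=\DD f(a(t+T/k))$: the linearisation at time $t$ is conjugate to the one at time $t+T/k$, not to itself. The correct consequence is the factorisation $\Phi_T=(\alpha^{-1}\Phi_{T/k})^k$, so the Floquet multipliers are the $k$-th powers of the eigenvalues of the twisted map $B=\alpha^{-1}\Phi_{T/k}$; but $\Phi_T$ itself satisfies only $\alpha\,\Phi_T(0)\,\alpha^{-1}=\Phi_T(T/k)$, which is conjugate to, not equal to, $\Phi_T(0)$, so no isotypic decomposition of the monodromy into blocks ``identified with the variational operator on $\MM$ twisted by a root of unity'' is available. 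Moreover this reduction is not what~(b) requires: comparing dimensions with Table~\ref{T:FloqMultRATE} (six CPG multipliers for a three-node CPG with two-dimensional nodes, against two transverse multipliers) shows that ``those on the module $\MM$'' must be read as the Floquet data of the whole CPG orbit, which Theorem~\ref{T:FFStab}(a) already supplies. The only place the module structure genuinely enters is in identifying the transverse exponents of \emph{every} chain module with those of the single representative module $\MM$, which follows from Theorem~\ref{T:FFStab}(b) together with the phase identification established in~(a). If you drop the character decomposition and keep that identification, your argument closes; you correctly sensed that this was the delicate point, but the fix is to remove the equivariant splitting, not to refine it.
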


The third  \cite[Theorem 5.10]{SW23a} is an analogous result for Liapunov stability:
\begin{theorem}
\label{T:FFLSstab}
The lifted periodic orbit $\{\tilde a(t)\}$ is Liapunov stable 
if and only if
$\{a(t)\}$ is Liapunov stable in the CPG state space 
and transversely Liapunov stable.
\end{theorem}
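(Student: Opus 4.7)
\noindent\textbf{Proof proposal for Theorem \ref{T:FFLSstab}.}

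The plan is to translate both conditions into statements about the system of deviations \eqref{E:SoD} and exploit the triangular block structure that the feedforward architecture imposes on that system. Write $y = (y_C, y_L)$, where $y_C$ lies in the CPG state space and $y_L$ in the complementary chain state space, so that the synchrony subspace containing $\{\tilde a(t)\}$ corresponds to $y_L = 0$. Because the chain receives inputs from the CPG but sends none back, \eqref{E:SoD} splits as
\begin{eqnarray*}
\dot y_C &=& F(y_C, t),\qquad F(0,t)\equiv 0,\\
\dot y_L &=& G(y_C, y_L, t),\qquad G(0,0,t)\equiv 0,
\end{eqnarray*}
where $F$ and $G$ are $T$-periodic in $t$, inherited from the period of $\{a(t)\}$. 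Liapunov stability of $\{\tilde a(t)\}$ is equivalent to Liapunov stability of the equilibrium $y=0$ of this combined non-autonomous system, by Definition \ref{D:LSorbit}; Liapunov stability of $\{a(t)\}$ on the CPG is equivalent to Liapunov stability of $y_C=0$ for the first equation in isolation; and transverse Liapunov stability, as used in \cite{SW23a}, is Liapunov stability of $y_L=0$ for the slice equation $\dot y_L = G(0, y_L, t)$.

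For necessity, restrict initial data to the synchrony subspace (set $y_L(0)=0$): by feedforward invariance this subspace is forward-invariant, so Liapunov stability of the full orbit immediately yields Liapunov stability of $\{a(t)\}$ in the CPG state space via the natural conjugacy. Next, restrict to $y_C(0)=0$: then $y_C(t)\equiv 0$ by uniqueness, the chain equation collapses to $\dot y_L = G(0, y_L, t)$, and stability of the full system forces stability of $y_L=0$ for this slice, which is transverse Liapunov stability.

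For sufficiency, fix $\eps>0$. Using uniform continuity of $G$ on a compact neighbourhood of $0$ (valid because $G$ is $T$-periodic in $t$), choose $\eta>0$ so small that
$$
\|G(y_C, y_L, t) - G(0, y_L, t)\| \ \text{is arbitrarily small whenever}\ \|y_C\|\leq \eta\ \text{and}\ \|y_L\|\leq \eps.
$$
By Liapunov stability of $\{a(t)\}$ on the CPG, choose $\delta_C>0$ so that $\|y_C(0)\|<\delta_C$ forces $\|y_C(t)\|<\eta$ for all $t\geq 0$. By transverse Liapunov stability applied to the slice equation $\dot y_L = G(0, y_L, t)$, choose $\delta_L>0$ so that $\|y_L(0)\|<\delta_L$ forces $\|y_L(t)\|<\eps/2$ under the unperturbed slice flow. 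The remaining step is a perturbation argument: one treats $\dot y_L = G(y_C(t), y_L, t)$ as a small perturbation of the slice flow, with perturbation bounded uniformly by the $\eta$-estimate above, and concludes by a continuation/Gronwall argument, using $T$-periodicity to patch the bound over successive periods, that $\|y_L(t)\|<\eps$ for all $t\geq 0$ when $\delta_C,\delta_L$ are sufficiently small.

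The main obstacle is precisely this last perturbation step, since Liapunov stability alone is not generally robust to perturbations of the vector field. The way I would overcome it is to exploit the $T$-periodicity of $G$ to obtain a uniform modulus of continuous dependence on $[0,T]$ via a Poincar\'e-section/time-$T$-map viewpoint: the time-$T$ map $\Phi_T$ of the slice equation fixes $0$ and is Liapunov stable there, and $C^1$-closeness of $\Phi_T$ to the time-$T$ map of the perturbed equation (ensured by smallness of $\|y_C(t)\|$ on $[0,T]$) gives the required iteration estimate. Patching across periods yields the global bound and completes the sufficiency direction.
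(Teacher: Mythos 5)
First, note that the paper does not actually prove Theorem~\ref{T:FFLSstab}: it is quoted verbatim from \cite[Theorem 5.10]{SW23a}, so there is no in-paper argument to compare yours against. Judged on its own terms, your decomposition into a skew product $\dot y_C = F(y_C,t)$, $\dot y_L = G(y_C,y_L,t)$ and your treatment of the ``only if'' direction (restrict to the invariant subspace for CPG stability; set $y_C(0)=0$ for transverse stability) are reasonable, modulo the fact that $y_L$ must be taken to be the \emph{difference} between a chain node's deviation and that of its synchronous CPG node for ``$y_L=0$'' to be the synchrony subspace.

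The genuine gap is in the ``if'' direction, and it is exactly where you flag it -- but your proposed fix does not work. Liapunov stability of a fixed point of a map is \emph{not} preserved under $C^1$-small perturbation, and the perturbation coming from $y_C(t)$ persists for all $t$, so errors accumulate over infinitely many periods. A minimal skew-product counterexample to your scheme is
\[
\dot y_C = 0, \qquad \dot y_L = y_C\, y_L .
\]
Here $y_C=0$ is Liapunov stable, and the frozen slice equation $\dot y_L = G(0,y_L,t)=0$ has $y_L=0$ Liapunov stable; yet for any $y_C(0)=c>0$ one gets $y_L(t)=y_L(0)\ee^{ct}\to\infty$, so the full equilibrium is unstable. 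In time-$T$-map language: the slice time-$T$ map is the identity, the perturbed one is multiplication by $\ee^{cT}$, which is $C^1$-close to the identity for small $c$ but unstable under iteration -- precisely the estimate your last paragraph asserts you can obtain. The conclusion to draw is that ``transversely Liapunov stable'' in \cite{SW23a} cannot mean what you have taken it to mean (stability of the slice equation driven by the frozen periodic orbit); it must be a stronger, uniform-in-the-driving-trajectory (or Liapunov-function-based) condition, engineered so that the ``if'' direction survives examples like the one above. With your weaker reading, the forward implication you prove establishes less than is needed, and the sufficiency direction is false as stated, so the Gronwall/Poincar\'e-map patching cannot be completed.
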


There is a natural analogue of Theorem~\ref{T:TWtranseigen}(c) for Liapunov
stability, proved in the same manner. We do not state it here.

\begin{remark}\em
If network $\GG$ is a lift of network $\HH$ then {\em all}
dynamic states for $\HH$ are preserved (subject to the appropriate
synchrony pattern) when lifted to $\GG$,
not just equilibria and periodic states. However, the concept of
stability is more complicated for more
complex states such as chaos. Indeed, stability on the CPG alone
alone is more complicated, because there are many different
concepts of an `attractor' \cite{M85}. Transverse stability opens
up further complications, and is mainly
understood for discrete dynamics and only one transverse dimension
\cite{ABS94,ABS96}, and even then there are several different scenarios.
Synchronous chaos is important as one method to ensure secure communications,
and has been widely studied from an electronic engineering viewpoint,
see for example \cite{Pec08,PB05,PC98}.
\end{remark}

\section{Rate Models}
\label{S:RM}

Having motivated the notion of a feedforward lift, and 
set up the theoretical aspects that we require, we now discuss
stability properties of lifted periodic orbits in three specific
feedforward lifts: the 7-node network of Figure \ref{F:7nodeFFZ3} and the two
proposed networks for biped locomotion in Figure \ref{F:4CPG}.

We model the dynamics of each node using a {\em rate model}, 
also called a {\em Wilson--Cowan} model \cite{ET10}. 
Here each node represents a neuron (or a population of neurons), and
the node variables correspond to the rate at which the neuron fires.
The state of node $\NN_{i}$ is described by a two-variable vector
$x_{i} = (x^E_{i},x^H_{i})$, where $x^E_{i} \in \R$ is an {\em activity variable}
and  $x^H_{i} \in \R$ is a {\em fatigue variable}. 

Nodes are coupled through a {\em gain function} $\GG: \R \rightarrow \R$. 
This satisfies the following conditions:
\begin{itemize}
\item[\rm (1)] 
$0 \leq \GG(x) \leq \rho$ for all $x \in \R$ and for some $0 <\rho \in \R$.
\item[\rm (2)]
$0 \leq \GG'(x) \leq \sigma$ for all $x \in \R$ and for some $0 <\sigma \in \R$.
\item[\rm(3)]
$\lim_{x\to -\infty} \GG(x) = 0$ and $\lim_{x\to +\infty} \GG(x) = \rho$.
\item[\rm (4)]
$\GG(x) \approx 0$ when $x \leq 0$.
\end{itemize}
Usually $\GG$ is sigmoid, so there is a unique inflexion point 
$x_0$ where $\GG''(x_0) = 0$, but we do not make this a requirement.

A standard choice of gain function is
\begin{equation}
\label{E:Ggen}
\GG(x) = \frac{a}{1 + \ee^{-b (x - c)}}
\end{equation}
where $a, b, c > 0$. (This notation is employed here for consitency
with the literature and should not be confused
with other uses in this paper.) See \cite{LC02,SCRR07} for a 
discussion of such models in the context of binocular rivalry.
 With reasonable choices of $a, b, c$
the value of $\GG(x)$ is very small when $x \leq 0$. 
A standard choice in the literature is $a = 0.8, b = 7.2, c = 0.9$,
for which $k = 0.8$. In the simulations of Section~\ref{S:Sim} we
use similar but simpler parameters.
Note that although $\GG(x)$ is very small for negative $x$, it is not identically
zero. Thus a `passive' node might not have a state exactly equal
to zero; instead, its values will be very close to zero. This assumption
is convenient for programming simulations and is standard in the literature.

The node equations are specified in terms of a {\em connection matrix}
\[
A = (\alpha_{ij}) \quad 1 \leq i, j \leq n
\]
where $n$ is the number of nodes, and $\alpha_{ij}$ is the
strength of the connection from node $j$ to node $i$ (positive
for excitation, negative for inhibition). The equations take the form
\begin{equation}
\label{E:genrateEq}
\begin{array}{rcl}
\varepsilon \dot{x}^E_i & = & -x^E_i + \GG\left( -gx^H_i +  \sum_{j \neq i} \alpha_{ij} x^E_j + I_i \right) \\
\dot{x}^H_i & = & x^E_i-x^H_i 
\end{array}
\end{equation}
for $1 \leq i \leq n$. 
This is a fast-slow system, 
and $\varepsilon \ll 1$ is the timescale on which $x^E_{i}$ evolves.
The parameter $g > 0$ is the strength of reduction of the
activity variable by the fatigue variable. 
$I_{i}$ is the external signal strength to node $i$. For simplicity it is standard to let
$I_{i} = 0$ if node $i$ is inactive, and $I_{i} = I$ if node $i$ is active, for a
constant $I$ usually taken to equal $1$.
However, we consider the general case here.

\subsection{Transverse Stability for Rate Models}
\label{S:TSTRM}

Rate models have 2-dimensional node spaces, so 
\cite[Theorem 6.4]{SW23a} does not apply. Nonetheless, we can
state and prove several theorems giving conditions under which
lifted periodic states for a rate model are transversely stable,
transversely Liapunov-stable, or transversely Floquet-stable.

The simplest (but least useful) case is transverse stability.
We prove that all rate models are transversely stable. As noted at the end of Section \ref{S:STS},
this does not imply Floquet-stability of lifted periodic
orbits, but it suggests that Floquet-stability is likely.

The following lemma is well known and easy to prove:
\begin{lemma}
\label{L:2x2}
All eigenvalues of a $2 \times 2$ matrix $A$ have negative real part if
and only if $\trace(A) < 0$ and $\det(A) > 0$.
\qed\end{lemma}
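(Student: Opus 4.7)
The plan is to use the characteristic polynomial of a $2\times 2$ matrix, which has the particularly simple form $p(\lambda)=\lambda^{2}-\trace(A)\lambda+\det(A)$, and apply Vieta's formulas: if $\lambda_{1},\lambda_{2}$ are the eigenvalues, then $\lambda_{1}+\lambda_{2}=\trace(A)$ and $\lambda_{1}\lambda_{2}=\det(A)$. I would prove both implications by splitting on the nature of the roots.

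First, for the \emph{if} direction, suppose $\trace(A)<0$ and $\det(A)>0$. The discriminant is $\Delta=\trace(A)^{2}-4\det(A)$. If $\Delta\geq 0$, the eigenvalues are real, and since their product is positive they have the same sign; since their sum is negative, they must both be negative. If $\Delta<0$, the eigenvalues are a complex conjugate pair $\alpha\pm i\beta$ with $\beta\neq 0$, so $2\alpha=\trace(A)<0$ gives $\alpha<0$, i.e., negative real part.

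For the \emph{only if} direction, suppose both eigenvalues have negative real part. If they are real, both are negative, so their sum is negative and their product is positive. If they are a complex conjugate pair $\alpha\pm i\beta$ with $\alpha<0$, then $\trace(A)=2\alpha<0$ and $\det(A)=\alpha^{2}+\beta^{2}>0$ (strictly, since either $\beta\neq 0$ or $\alpha\neq 0$).

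There is no real obstacle here: the result is a two-line consequence of Vieta and case analysis on the sign of the discriminant. The only mild care needed is to observe that the product of a complex conjugate pair is automatically nonnegative, and is strictly positive whenever the pair is nontrivial or the real part is nonzero, so the $\det(A)>0$ condition is indeed forced by (and not redundant to) the sign condition on the real parts. This could alternatively be phrased as the degree-two case of the Routh--Hurwitz criterion, but the direct argument above is shorter and self-contained.
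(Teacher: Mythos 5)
Your proof is correct and complete; the paper itself omits the proof entirely (the lemma is stated as ``well known and easy to prove''), and your Vieta-plus-discriminant case analysis is exactly the standard argument one would supply. Nothing is missing, and the care you take over the strictness of $\det(A)>0$ in the complex-conjugate case is appropriate.
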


We immediately deduce:

\begin{theorem}
\label{T:Ratestable}
For any rate model and any feedforward lift, the synchrony 
space $S$ is transversely stable at every point.
\end{theorem}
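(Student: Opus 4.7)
The plan is to combine the block-triangular structure of the Jacobian at points of the synchrony subspace (a direct consequence of the feedforward structure) with the 2D Lemma \ref{L:2x2} applied to each transverse diagonal block. By the definition of transverse stability of the synchrony subspace recalled in Section \ref{S:STS}, it suffices to show that each diagonal block of the Jacobian, corresponding to a chain node and computed with inputs treated as parameters, has eigenvalues with negative real part.

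First I would write down the transverse block explicitly. For a rate model, the internal dynamics at node $i$ are governed by \eqref{E:genrateEq}, and when we differentiate the right-hand side with respect to the node's own coordinates $(x^E_i, x^H_i)$ only (which is exactly the transverse block for a chain node in a feedforward lift), we obtain
\[
J_i \;=\; \begin{pmatrix} -1/\varepsilon & -g\,\GG'(u_i)/\varepsilon \\[2pt] 1 & -1 \end{pmatrix},
\]
where $u_i = -g x^H_i + \sum_{j\neq i}\alpha_{ij}x^E_j + I_i$ is the argument of the gain function at the point in question. The off-diagonal contributions from other nodes enter the strictly lower-triangular part of the full Jacobian because the lift is feedforward, and hence do not affect this block's eigenvalues.

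Next I would apply Lemma \ref{L:2x2}. We compute
\[
\trace(J_i) = -\tfrac{1}{\varepsilon} - 1 < 0, \qquad \det(J_i) = \tfrac{1}{\varepsilon} + \tfrac{g\,\GG'(u_i)}{\varepsilon} = \tfrac{1 + g\,\GG'(u_i)}{\varepsilon}.
\]
Since $\varepsilon, g > 0$ and the gain function satisfies $\GG'(u_i) \geq 0$ by property (2), we have $\det(J_i) > 0$. Hence both eigenvalues of $J_i$ have strictly negative real part, and this holds at every point of state space, not merely on a specific orbit.

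Since this argument applies to every chain node independently (regardless of how far along the chain it sits, since in a feedforward lift each node's transverse block depends only on its own two variables), the synchrony subspace is transversely stable at every point in the sense of \cite[Section 6]{SW23a}. There is no real obstacle here: the only substantive use of the rate-model structure is the sign information $\GG' \geq 0$ together with positivity of $\varepsilon$ and $g$. I do not need the sigmoidal shape, nor any bound on $\GG'$. (As flagged in the paragraph following the Markus--Yamabe remark, this will \emph{not} directly imply transverse Floquet stability along periodic orbits; that stronger statement is the subject of the later results in Section~\ref{S:LFRM}.)
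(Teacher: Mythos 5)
Your proposal is correct and follows essentially the same route as the paper: identify the $2\times2$ diagonal (transverse) block of the Jacobian for a chain node, compute $\trace(J_i)=-1/\varepsilon-1<0$ and $\det(J_i)=(1+g\,\GG'(u_i))/\varepsilon>0$ using $\varepsilon,g>0$ and $\GG'\geq 0$, and apply Lemma~\ref{L:2x2}. The extra remarks about the block-triangular structure and the non-implication of Floquet stability are consistent with the paper's surrounding discussion.
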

\begin{proof}
The diagonal part of the Jacobian, for node $i$, is equal to
\begin{equation}
\label{E:rateJac}
A = \Matrix{-1/\eps & -\frac{g}{\varepsilon}\GG^\prime\left( -gx^H_i +  \sum_{j \neq i} \alpha_{ij} x^E_j + I_i \right) \\
1 & -1}
\end{equation}
Therefore
\beqn
\trace(A) &=& -1/\eps -1 < 0 \\
\det(A) &=& 1/\eps + \frac{g}{\varepsilon}\GG^\prime\left( -gx^H_i +  \sum_{j \neq i} \alpha_{ij} x^E_j + I_i \right) > 0
\eeqn
because $\eps > 0$ and $\GG'(x) \geq 0$ for all $x$. Now apply Lemma~\ref{L:2x2}.
\end{proof}

For this reason we do not show plots of the real parts of transverse eigenvalues
for the rate models considered below.

\subsection{Liapunov Functions for Rate Models}
\label{S:LFRM}

In this section we construct two Liapunov functions for rate models, 
which show that under certain conditions on parameters,
lifted periodic states are transversely Liapunov stable. By condition (c)
of Definition \ref{D:LiapFunc}, the same Liapunov function
implies transverse asymptotic stability. With some
extra effort we can use condition (d) of Definition \ref{D:LiapFunc}
to parlay this result into a proof of transverse Floquet-stability under
slightly different conditions. It is probably possible to improve 
on these results. They can be viewed as proof-of-concept;
we apply them to a model of biped locomotion in Section~\ref{S:PCA}.

To simplify expressions, scale time from $t$ to $s= \eps t$. Then $\dd s = \eps \dd t$ 
so $\dd/\dd s = \frac{1}{\eps}\dd/\dd t$.
In scaled time (now called $t$) we can
write the Floquet equation for transverse stability as
\[
\Matrix{\dot u \\ \dot v} = \Matrix{-1 & -g \eta(t) \\ \eps & -\eps}\Matrix{ u \\ v}
\]
where $\eta(t)$ is the input to the first node $c$ of the chain from the CPG:
\begin{equation}
\label{E:eta(t)}
\eta(t)= \GG^\prime\left( -gx^H_{[c]}(t) +  \sum_{j \neq i} \alpha_{ij} x^E_{[j]}(t) + I_i \right) \geq 0
\qquad \mbox{for all}\ t \in \R
\end{equation}
Here $[c]$ is the node in the CPG that is synchronous with node $c$.

In fact, $\eta(t) > 0$ for all $t \in \R$ for the gain function \eqref{E:Ggen}.
Since a periodic orbit is compact, there exist bounds $D_0, D$ such that
\begin{equation}
\label{E:gamma_bounds}
0 \leq  D_0 \leq \eta(t) \leq D
\end{equation}
 and $g, \eps \geq 0$. For the gain function \eqref{E:Ggen} we have $D_0 > 0$.
Define 
\[
\zeta(t) = g\eta(t) \geq 0
\]
and when $t$ is irrelevant write $\zeta$ in place of $\zeta(t)$.
Then
\[
0 \leq \zeta(t) \leq gD
\]

\subsubsection{Liapunov Function 1}

The function $\sqrt{\eps x^2 + y^2}$ is a norm on $\R^2$. We claim that
\[
L(t) = \eps u^2 + v^2
\]
is a Liapunov function provided $-1 < \zeta < 3$.

\begin{remark}\em
The norm itself is also a Liapunov function under the same conditions. 
Indeed,
\[
\frac{\dd}{\dd t} \sqrt{\eps u(t)^2 + v(t)^2} 
	= \frac{\frac{\dd}{\dd t}(\eps u(t)^2 + v(t)^2)}{2\sqrt{\eps u(t)^2 + v(t)^2}}
\]
so this norm of $(u(t),v(t))$ is a Liapunov function provided $L(t)$ is.
We work with $L(t)$ for simplicity.
\end{remark}

We have:
\beqn
\dot L(t) &=& 2 \eps u \dot u + 2 v \dot v\\
&=& 2 \eps u (-u- \zeta v) + 2 v \eps (u-v) 
\eeqn
so
\beqn
-\shf \dot L(t) &=& 
\eps u (u+ \zeta v) +  v \eps (-u+v) \\
&=& \eps u^2 + (\zeta\eps -\eps)uv +\eps v^2\\
&=& \eps(u^2+(\zeta-1)uv+v^2)
\eeqn
We seek regions in which $u^2+(\zeta-1)uv+v^2 \geq 0$. Then $\dot L(t) \leq 0$
so $L(t)$ is a Liapunov function.

To make the quadratic form $Q(u,v) = u^2+(\zeta-1)uv+v^2$ positive definite,
consider its discriminant
\[
\Delta = (\zeta-1)^2 - 4 = \zeta^2-2\zeta-3
\]
This is negative when $-1 < \zeta < 3$. Now $\Delta < 0$ implies that
$Q$ is definite, and it is clearly positive definite (since when $\zeta=1$, inside that range,
it is $u^2+v^2$).

This proves:
\begin{proposition}
\label{P:Liap1}
If $0 \leq gD \leq 3$ then the lifted periodic orbit $\{\tilde a(t)\}$ is transversely Liapunov stable. 
\qed \end{proposition}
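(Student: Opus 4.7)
The plan is to verify that $L(t) = \eps u^2 + v^2$ meets the three conditions of Definition \ref{D:LiapFunc} on the transverse Floquet system derived just above, and then appeal to Liapunov's Second Method to conclude transverse Liapunov stability. Conditions (a) and (b) are immediate, since $\eps > 0$ makes $L$ a positive linear combination of squares vanishing only at $(u,v) = (0,0)$; no further work is needed there.

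The substance of the proof lies in condition (c). Here I would simply record the derivative computation already displayed in the text, which yields the identity
\[
-\tfrac{1}{2} \dot L(t) \;=\; \eps\, Q(u,v), \qquad Q(u,v) \;=\; u^2 + (\zeta(t)-1)\,uv + v^2.
\]
The crux is that $Q$ must be positive semidefinite along \emph{every} transverse trajectory. Viewing $Q$ as a real symmetric quadratic form in $(u,v)$ with diagonal entries $1$ and mixed coefficient $\zeta(t)-1$, the standard $2\times 2$ discriminant test gives $Q \geq 0$ precisely when $(\zeta-1)^2 \leq 4$, i.e. when $-1 \leq \zeta(t) \leq 3$. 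The lower bound is automatic: $\eta(t) \geq 0$ by \eqref{E:gamma_bounds} and $g \geq 0$, so $\zeta(t) = g\eta(t) \geq 0 > -1$. The upper bound $\zeta(t) \leq 3$ is exactly where the hypothesis enters, since $\zeta(t) \leq gD \leq 3$ holds uniformly in $t$ by \eqref{E:gamma_bounds} and the assumption $gD \leq 3$.

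Putting these together, $\dot L(t) \leq 0$ along every trajectory of the transverse Floquet system, so Liapunov's Second Method gives transverse Liapunov stability of $\{\tilde a(t)\}$. The only nontrivial point, and the one that forces the form of the hypothesis, is that the discriminant inequality must hold uniformly in $t$ rather than merely pointwise along a given orbit; this is why the uniform bound $D$ on $\eta(t)$ supplied by \eqref{E:gamma_bounds} is needed, and why the threshold is $gD = 3$ rather than an instantaneous condition on $g\eta(t)$. No further obstacle arises, and the proof is therefore essentially a two-line assembly of the computation and discriminant analysis already on display.
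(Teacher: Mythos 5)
Your proposal is correct and follows essentially the same route as the paper: the same Liapunov function $L(t)=\eps u^2+v^2$, the same derivative identity $-\shf\dot L=\eps\,Q(u,v)$, and the same discriminant test on $Q$. If anything you are slightly more careful than the text at the boundary $\zeta=3$, where positive semidefiniteness (rather than definiteness) still gives $\dot L\leq 0$, matching the non-strict hypothesis $gD\leq 3$ in the statement.
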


With a little more effort we can prove transverse Floquet stability:

\begin{proposition}
\label{P:trans_stab_Floq}
If $\eps \leq 4$ and $0 \leq gD_0 <  \shf(2+\sqrt{3(4-\eps)})$,
the lifted periodic orbit $\{\tilde a(t)\}$ is transversely Floquet 
stable.
\end{proposition}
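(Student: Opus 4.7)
The plan is to strengthen Proposition~\ref{P:Liap1} by upgrading its Liapunov inequality from condition~(c) to the strict exponential-decay estimate of condition~(d) in Definition~\ref{D:LiapFunc}: I will produce an $\alpha > 0$, uniform in $t$, such that $\frac{\dd}{\dd t}L(t) \leq -\alpha L(t)$ along the transverse Floquet equation. Liapunov's Second Method then gives $L(t) \leq L(0)\ee^{-\alpha t}$, i.e.\ exponential decay of transverse perturbations, which by \cite{HS74} is equivalent to transverse Floquet stability. Combined with Theorem~\ref{T:FFStab} this yields the Floquet stability of $\{\tilde a(t)\}$ claimed in the proposition.

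I would retain the Liapunov function $L(u,v) = \eps u^2 + v^2$ from the proof of Proposition~\ref{P:Liap1} and compute
\[
\tfrac{\dd}{\dd t}L + \alpha L
= -\bigl[(2\eps - \alpha\eps)u^2 + 2\eps(\zeta(t)-1)uv + (2\eps - \alpha)v^2\bigr],
\]
with $\zeta(t) = g\eta(t)$. Requiring $\dot L + \alpha L \leq 0$ is equivalent to positive semi-definiteness of the bracketed quadratic form, which, after Lemma~\ref{L:2x2}, reduces to the discriminant inequality
\[
\eps\bigl(\zeta(t)-1\bigr)^2 \leq (2-\alpha)(2\eps-\alpha)
\]
together with the diagonal caps $\alpha \leq \min(2, 2\eps)$. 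Invoking \eqref{E:gamma_bounds} and the convexity of $(\zeta-1)^2$, its supremum over $t$ is attained at an endpoint of $[gD_0, gD]$, say $\zeta_*$, so the time-uniform condition becomes a single quadratic inequality in $\alpha$ whose solvability with $\alpha > 0$ (and subject to the caps) determines the admissible range of $gD_0$ and $\eps$.

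The main obstacle will be the algebra that yields precisely $\shf(2+\sqrt{3(4-\eps)})$ as the closed-form bound. The factor of $3$ under the radical, together with the hypothesis $\eps \leq 4$, strongly suggests that the optimal $\alpha$ is not the smaller root of the quadratic in isolation but is instead fixed by simultaneous activation of the cap $\alpha \leq 2$ and the discriminant constraint; the hypothesis $\eps \leq 4$ is precisely what is needed for this cap (rather than $\alpha \leq 2\eps$) to be the operative one. A secondary subtlety is identifying which endpoint of $[gD_0, gD]$ realises $\zeta_*$, since $(\zeta-1)^2$ is convex: the fact that only $gD_0$ appears in the hypothesis indicates that a centring or monotonicity argument on $\zeta$ singles out $\zeta_* = gD_0$ as the binding extreme. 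Should the plain choice $L = \eps u^2 + v^2$ fail to deliver the advertised sharp constant, I would generalise to $L = p_1 u^2 + 2\beta uv + p_2 v^2$ and reoptimize over $(p_1, \beta, p_2, \alpha)$; since the transverse matrix is $2\times 2$ this is a low-dimensional algebra problem that can be carried through explicitly.
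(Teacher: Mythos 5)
Your overall framework coincides with the paper's: keep $L=\eps u^2+v^2$, upgrade from condition (c) to condition (d) of Definition \ref{D:LiapFunc}, and reduce $\dot L+\alpha L\le 0$ to a discriminant condition on the resulting quadratic form. Your general inequality $\eps(\zeta-1)^2\le(2-\alpha)(2\eps-\alpha)$ is correct and contains the paper's computation as a special case. Where your plan would fail is in the proposed mechanism for extracting the constant $\shf(2+\sqrt{3(4-\eps)})$. The paper does not optimise $\alpha$, and no cap is ``activated'': it simply fixes $k=\eps/2$, for which your right-hand side is $(2-\eps/2)(2\eps-\eps/2)=\tfrac{3}{4}\eps(4-\eps)$, so the condition reads $(\zeta-1)^2\le\tfrac{3}{4}(4-\eps)$, whose roots in $\zeta$ are exactly $\shf(2\pm\sqrt{3(4-\eps)})$. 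The hypothesis $\eps\le 4$ is what makes these roots real (equivalently, makes the right-hand side nonnegative); it is not the condition for the cap $\alpha\le 2$ to bind, and hunting for a simultaneous activation of cap and discriminant will not land on this constant. Once you substitute $\alpha=\eps/2$, your setup finishes the proof without any need for the general-quadratic-form fallback.

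Your secondary speculation about the endpoints of $[gD_0,gD]$ also points at a real wrinkle rather than resolving it. Since the decay estimate must hold for all $t$ and $(\zeta-1)^2$ is maximised at whichever endpoint is farther from $1$, the operative hypothesis is an upper bound on $gD=g\sup_t\eta(t)$; no centring or monotonicity argument can make $gD_0$ the binding extreme for an \emph{upper} bound exceeding $1$. In the paper's own proof $D_0$ enters only in a preliminary suggestion $k=gD_0$ for the decay rate, which is then superseded by $k=\eps/2$, and the appearance of $D_0$ rather than $D$ in the statement of the proposition is best read as a slip rather than something your proof needs to reproduce.
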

\begin{proof}
We use condition (d) in Section \ref{S:LS}.
Suppose we can obtain a more specific estimate of the form
\[
\dot L(t) \leq -k L(t) < 0 \qquad \mbox{or equivalently,}\qquad \dot L(t) + k L(t) < 0
\]
for some $k > 0$. Then we deduce that $L(t) \leq K\ee^{-kt}$ for a constant $K$,
which tends exponentially to 0. This proves transverse  exponential stability, 
which is equivalent to Floquet stability.

In particular, this condition holds if $k = gD_0$ for $D_0$ as in \eqref{E:gamma_bounds}.
(We need $D_0$, not $D$, because of the minus sign. Recall from \eqref{E:eta(t)} that
$\eta(t) \geq 0$ for all $t \in \R$.)

To obtain such a constant $k$, consider
\beqn
\dot L(t) + kL(t) &=& -2\eps(u^2+(\zeta-1)uv+v^2)+k(\eps u^2+v^2)\\
	&=& u^2(-2\eps+k\eps)+2 uv\,\eps(\zeta-1)+v^2(-2\eps+k)
\eeqn
Let $k = \eps/2$. Then
\beqn
\dot L(t) + kL(t) &=& u^2(-2\eps+\eps^2/2)+2uv\,\eps(\zeta-1)+v^2(-\frac{3}{2}\eps)\\
	&=& -[u^2(2\eps-\eps^2/2)+2uv\,\eps(1-\zeta)+v^2(\frac{3}{2}\eps)]
\eeqn
We seek to make the quadratic form 
\[
Q(u,v)=u^2(2\eps-\eps^2/2)+2 uv\,\eps(1-\zeta)+v^2(\frac{3}{2}\eps)
\]
positive definite. Then the minus sign makes $\dot L(t) + kL(t) \leq 0$ as required.

The discriminant of Q is
\[
\Delta = 4\eps^2(1-\zeta)^2-4 (2\eps-\eps^2/2)(\frac{3}{2}\eps) = \eps^2(4\zeta^2-8\zeta-8+3\eps)
\]
For arbitrary $\eps >0$
the zeros of $\Delta$ are $\shf(2\pm\sqrt{3(4-\eps)})$. These are real
whenever $\eps \leq 4$. When $\eps$ lies between these zeros, $\Delta < 0$.
\end{proof}

\begin{example}\em
\label{ex:Floq_ex}
When $\eps = 0.5$ , $\Delta$ is negative
whenever $-0.224745 \leq \zeta \leq 2.22474$. With  $\eps = 0.6$ it is negative
whenever $-0.596872 \leq \zeta \leq 2.59687$.

As $\eps \to 0$ the bounds $\shf(2\pm\sqrt{3(4-\eps)})$
tend to  $1\pm\sqrt{3} =  2.73205, -.73205$.
\END\end{example}

\subsubsection{Liapunov Function 2}
The function $\sqrt{\eps^2 x^2 + y^2}$ is also a norm on $\R^2$. Let
\[
L(t) = \eps^2 u(t)^2 + v(t)^2
\]
Then
\[
-\sq \dot L(t) = \eps^2 u^2 + (\zeta\eps^2-\eps)uv + \eps v^2 = \eps(\eps u^2 + (\zeta\eps-1)uv +  v^2)
\]
If $\eps u^2 + (\zeta\eps-1)uv +  v^2 \geq 0$, then $\dot L(t) \leq 0$
so $L(t)$ is a Liapunov function. We seek regions in which this condition holds.

To make the quadratic form $Q(u,v) = \eps u^2 + (\zeta\eps-1)uv +  v^2$ positive definite,
consider its discriminant
\[
\Delta = \eps^2[(\eps \zeta-1)^2 - 4\eps ]
\]
Again $\Delta < 0$ implies that
$Q$ is definite, and it is clearly positive definite. This happens when
\[
\frac{1-2\eps}{\eps} < \zeta < \frac{1+2\eps}{\eps} 
\]

If, for example, $\eps = 0.5$,
this is negative when $-0.828427\leq \zeta \leq 4.82843$. 

If, for example, $\eps = 0.6$,
this is negative when $-0.596872\leq \zeta \leq 2.59687$. 

This proves:
\begin{proposition}
\label{P:Liap2}
If $0 \leq g D \leq 4.82843$ and $\eps = 0.5$, then the lifted periodic orbit
is transversely Liapunov stable for $t \geq 0$.
\qed \end{proposition}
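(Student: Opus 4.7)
The plan is to exhibit $L(t) = \eps^2 u(t)^2 + v(t)^2$ as a Liapunov function for the transverse system and verify conditions (a)--(c) of Definition~\ref{D:LiapFunc}. Positivity and definiteness are immediate from the fact that $L$ is a squared norm on $\R^2$. The substantive work lies in showing $\dot L(t) \leq 0$ along trajectories of the transverse Floquet equation
\[
\Matrix{\dot u \\ \dot v} = \Matrix{-1 & -\zeta(t) \\ \eps & -\eps}\Matrix{u \\ v},
\]
where $\zeta(t) = g\eta(t)$ satisfies $0 \leq \zeta(t) \leq gD$ by \eqref{E:gamma_bounds}.

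First I would differentiate $L(t)$ along the flow, obtaining $\dot L(t) = 2\eps^2 u\dot u + 2v\dot v$, and then substitute the equations for $\dot u, \dot v$. A direct expansion yields
\[
-\tfrac14 \dot L(t) = \eps\bigl(\eps u^2 + (\zeta\eps - 1)uv + v^2\bigr),
\]
exactly as displayed just before the statement. So proving $\dot L(t) \leq 0$ reduces to verifying that the symmetric quadratic form $Q(u,v) = \eps u^2 + (\zeta\eps-1)uv + v^2$ is positive semidefinite.

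Second, I would apply the standard discriminant test for a $2\times 2$ symmetric form: $Q$ is positive definite whenever $(\eps\zeta-1)^2 < 4\eps$, equivalently $|\eps\zeta - 1| < 2\sqrt{\eps}$, which rearranges to
\[
\frac{1-2\sqrt{\eps}}{\eps} < \zeta < \frac{1+2\sqrt{\eps}}{\eps}.
\]
Continuity then gives positive semidefiniteness on the closure. Specialising to $\eps = 0.5$, this interval becomes $2(1-\sqrt{2}) \leq \zeta \leq 2(1+\sqrt{2})$, numerically $-0.828427 \leq \zeta \leq 4.82843$.

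Third, I would translate this pointwise condition on $\zeta(t)$ into a condition on parameters along the entire lifted periodic orbit. Since $\zeta(t) = g\eta(t) \geq 0$ by \eqref{E:eta(t)}, and $\zeta(t) \leq gD$ by \eqref{E:gamma_bounds}, the hypothesis $0 \leq gD \leq 4.82843$ guarantees $\zeta(t)$ lies in the required interval for every $t$. Hence $Q$ is positive semidefinite, $\dot L(t) \leq 0$ uniformly along the transverse dynamics, and $L$ satisfies conditions (a)--(c) of Definition~\ref{D:LiapFunc}. Invoking Liapunov's Second Method then yields transverse Liapunov stability of the lifted periodic orbit. The only potential subtlety is that the argument requires the bound on $\zeta(t)$ to hold uniformly in $t$; this is ensured by the compactness of the underlying CPG periodic orbit and the continuity of $\GG'$, which is why the uniform bound $D$ in \eqref{E:gamma_bounds} exists in the first place.
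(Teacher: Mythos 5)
Your proposal is correct and follows the paper's own argument for Proposition~\ref{P:Liap2} essentially verbatim: the same Liapunov function $L(t)=\eps^2u^2+v^2$, the same differentiation along the transverse Floquet equation reducing the problem to positive (semi)definiteness of $Q(u,v)=\eps u^2+(\zeta\eps-1)uv+v^2$, the same discriminant test, and the same numerical interval $-0.828427\leq\zeta\leq 4.82843$ at $\eps=0.5$, with your closing remark on uniformity of the bound on $\zeta(t)$ via compactness of the periodic orbit matching the paper's use of \eqref{E:gamma_bounds}. The one point worth flagging is that your general-$\eps$ condition $\frac{1-2\sqrt{\eps}}{\eps}<\zeta<\frac{1+2\sqrt{\eps}}{\eps}$ is the correct reading of the discriminant inequality $(\eps\zeta-1)^2<4\eps$, whereas the paper's displayed bound $\frac{1\pm 2\eps}{\eps}$ omits the square roots (a typo, since the paper's own numerical specialisation to $\eps=0.5$ agrees with your formula, not with its displayed one).
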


Estimates like this can be obtained for a variety of similar functions $L(t)$.
Since the above results are `proof of concept', we do not seek more
general statements.


\subsection{Simulations for 7-Node Chain}
\label{S:S7NC}

Propositions~\ref{P:Liap1} and \ref{P:trans_stab_Floq}
apply only to {\em transverse} stability (in the Liapunov or Floquet sense).
For stability of the lifted periodic state we also require the CPG
periodic state to be stable. We currently have no analytic results about this issue, 
but we present some numerical computations of
both CPG and transverse Floquet multipliers.

In this subsection we investigate a rate model for the 7-node chain, of the form
\begin{equation}
\label{E:7rateEq}
\begin{array}{rcl}
\varepsilon \dot{x}^E_i & = & -x^E_i + \GG\left( -gx^H_i +  \alpha x^E_{c(i)} + I \right) \\
\dot{x}^H_i & = & x^E_i-x^H_i 
\end{array}
\end{equation}
for $1 \leq i \leq 7$. Here $c(i) = i-1$ when $i > 1$, and $c(1)=3$.
We use the gain function 
\begin{equation}
\label{E:gainfunc}
\GG(x) = \frac{1}{1+\ee^{-8(x-1)}} = \frac{1}{1+\ee^{8(1-x)}} 
\end{equation}
from \cite[Section 14]{S14}.

For a wide range of parameters  leading to the travelling wave with 
$\sot$-period phase shifts,
the computed transverse Floquet exponents indicate stability. 
Table \ref{T:FloqMultRATE} shows CPG and transverse Floquet multipliers
for the 7-node chain with 3-node CPG, for the four parameter sets
listed in (\ref{E:RATEparams1}--\ref{E:RATEparams4}).
As is well known \cite[Section 1.5]{GH83}, the Floquet operator always has at least one multiplier
equal to $1$, corresponding to an eigenvector tangent to the orbit.
This multiplier occurs on the CPG, but not among the transverse multipliers.

\begin{table}[!htb]
\small
\begin{center}
\begin{tabular}{|l|l|l|l|l|l|}
\hline
param. & $T$ & CPG multipliers  & abs. & transverse multipliers & abs. \\
\hline
\hline
\eqref{E:RATEparams1} & $5.783$ & $1$ 
	& $1$ & $0.546$ & $0.546$\\		
& & $0.433 \pm 0.181 \,\ii$ & $0.470$ &0.00315& $0.00315$ \\
stable & & $0.396 $ & 0.396 & &  \\
& & $0.0368 $ & 0.0368 & &  \\
& & $1.58\times 10^{-6}$  & $1.58\times 10^{-6}$  & & \\
\hline
\eqref{E:RATEparams2}& $4.612$& $1$ & $1$ & $0.0898$ & $0.0898$\\
& & $0.0326 \pm 0.0934 \,\ii$ & $0.0989$ & 0.0110&0.0110 \\
stable& & $0.021 \pm 0.0372\,\ii$ & $0.0428$ & & \\
& & $0.0000538 $ & $0.0000538$ & & \\
\hline
\eqref{E:RATEparams3}& $3.146$ & $1$ & $1$ & $0.0199 \pm 0.150 \,\ii$ & $0.151$\\
& & $0.484 \pm 0.207 \,\ii$ & $0.526$ & & \\
stable& & $ 0.419$ & $ 0.419$ & & \\
& & $0.0578$ & $0.0578$ & & \\
& & $0.00178$ & $0.00178$ & & \\
\hline
\eqref{E:RATEparams4}&  $4.373$ & $1$ & $1$ & $0.0260$& $0.0260$ \\
&  & $ -0.0120 \pm 0.0269 \,\ii $ & $0.0294$ & 0.0146& 0.0146\\
stable& & $0.0138$ & $0.0138$ & & \\
& & $-0.000996 \pm 0.00190 \,\ii$ & $0.00215$ & & \\
\hline
\end{tabular}
\caption{CPG and transverse Floquet multipliers for the four rate model examples.
The CPG periodic orbit and the lifted periodic orbit are stable for all four parameter sets.
All entries stated to three significant figures
after the decimal point.}
\label{T:FloqMultRATE}
\end{center}
\end{table}

\begin{figure}[h!]
\centerline{%
\includegraphics[width=0.4\textwidth]{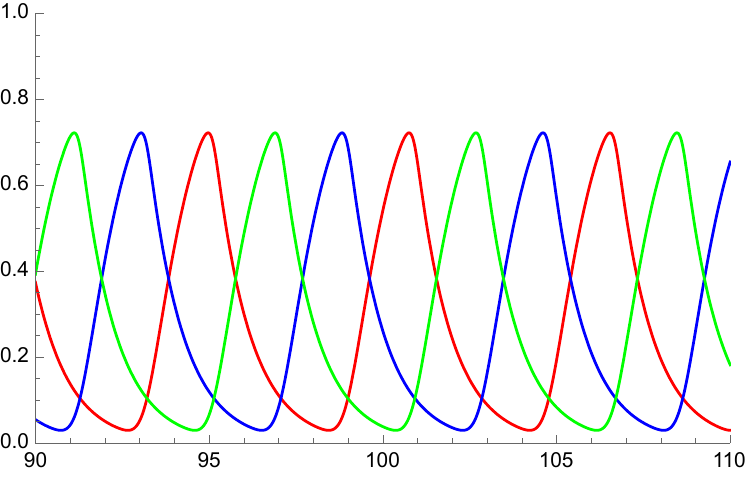}\qquad\
\includegraphics[width=0.4\textwidth]{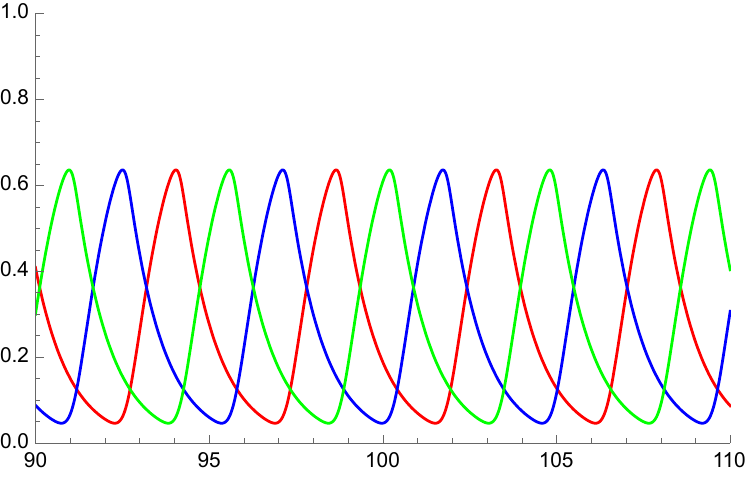}
}
\centerline{%
\includegraphics[width=0.4\textwidth]{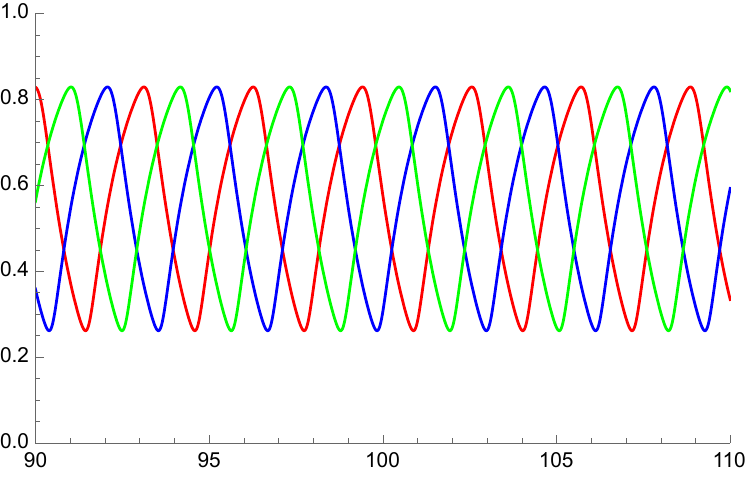}\qquad\
\includegraphics[width=0.4\textwidth]{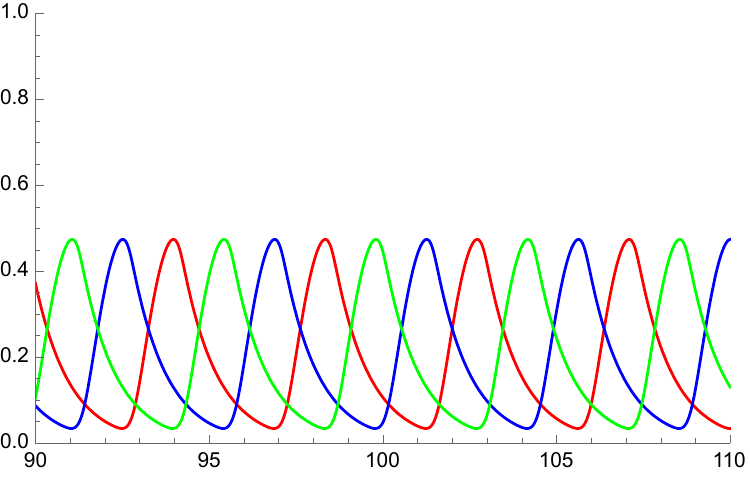}
}
\caption{Simulated time series for $x_1^E,x_2^E,x_3^E$ for the 7-node chain.
Parameter sets are (top left) \eqref{E:RATEparams1}, (top right) \eqref{E:RATEparams2}, 
(bottom left) \eqref{E:RATEparams3}, (bottom right) \eqref{E:RATEparams4}.}
\label{F:Rate_time_series}
\end{figure}

Figure~\ref{F:Rate_time_series} shows the corresponding time series
for the three different synchrony classes.
We consider four typical parameter sets:
\begin{eqnarray}
 \label{E:RATEparams1}
&& I=2 \quad \alpha = -5 \quad  g=2 \quad \eps = .1\\
\label{E:RATEparams2}
&& I=2 \quad \alpha = -5 \quad  g=2 \quad \eps = .5\\
\label{E:RATEparams3}
&& I=4 \quad \alpha = -3 \quad  g=2 \quad \eps = .2\\
\label{E:RATEparams4}
&& I=2 \quad \alpha = -8 \quad  g=3 \quad \eps = .8
\end{eqnarray}

\begin{remark}\em
Conditions for a given primary biped gait of the 4-node CPG to be the `first bifurcation', hence
potentially Floquet-stable, are given in \cite{S14}. Because the symmetry group
$\Z_2\times \Z_2$ is abelian, the isotropy subgroup acts either trivially (for the hop)
or as $\Z_2$ (run, jump, walk) when the kernel of the action is factored out. 
Therefore the usual stability criterion for generic Hopf
bifurcation applies: a supercritical branch is stable and a subcritical branch is unstable.
See \cite[Chapter 1 Section 2 Theorem III]{HKW81}.

The simulations clearly indicate that the Hopf branches are supercritical near
bifurcation, because the phase patterns do not depend on randomly chosen
initial conditions. It would be of interest to confirm this analytically, which should
be a feasible calculation.
\end{remark}

\section{Application to Biped Locomotion}
\label{S:ABL}

We now introduce a
feedforward lift and a modified version of it, slightly relaxing the 
feedforward condition, which provides a reasonable model for
locomotion of animals with many segments, such as centipedes, millipedes,
worms, snakes, and lampreys. Something similar may happen in the spinal column
of mammals and reptiles.

We start with the 4-node network of Figure~\ref{F:4CPG},
proposed in \cite{PG06} as a CPG for bipedal locomotion.
Each leg corresponds to two nodes: one flexor and one extensor.
Figure \ref{F:4CPGcascade} shows two possible lifts, for the balanced
colouring shown in shades of white and grey.
The top one is a feedforward lift. The second is not totally feedforward
because of the lateral connections, such as those between nodes 5 and 7, or 6 and 8,
but similar ideas apply with minor
modifications.

Theorem \ref{T:TWtranseigen} applies directly to the top network.
Therefore, if the periodic orbit of the CPG is Floquet-stable and the lifted orbit
is transversely Floquet-stable, the lifted periodic orbit is Floquet-stable.
Figure \ref{F:4CPGcascade} (bottom) is perhaps more plausible biologically, because
the extra nodes in the cascade are grouped
into modules with the same architecture, with
nearest-neighbour coupling between modules.
Each module comprises two nodes
with bidirectional connections; such connections are said to be {\em lateral}. 
Examples of such modules are $\{5,7\}$ and $\{6,8\}$.
The CPG has much the same architecture
as each module, except that the CPG has four arrows pointing towards 
the left (upstream). The structure can be continued arbitrarily far, providing
a simple way to propagate the oscillation patterns. (The total number
of columns can be odd or even.) It would be easy for this kind of network to evolve
in an organism such as an arthropod, from which most modern
vertebrates evolved.

Figure \ref{F:4CPGcascade} (bottom) is reminiscent of
the network in the nervous system that controls the
heartbeat of the medicinal leech, Figure \ref{F:leech}.
It is very much in the spirit of experimentally observed CPGs
\cite{TBBFPK13,G03,K06,G09,GJ09}.
It is also similar to the models studied in 
\cite{K88, KE86, KE88, KE90}, but these 
papers mainly look at continuum limits.

Apart from the shaded square of four nodes (which reproduce Figure~\ref{F:4CPG})
no arrows point towards the left, so it is {\em almost} a feedforward network.
However, because there are bidirectional {\em lateral} connections 
(dashed arrows, vertical in the figure), Theorem~\ref{T:Ratestable}
does not apply directly. Since these lateral connections are
biologically plausible we consider this generalisation
in its own right, using similar methods.
We will show that transverse stability of the synchrony subspace
 can occur in a rate model for this network,
but it is not valid for all coupling strengths. Transverse Liapunov and Floquet stability
also occur for suitable parameters.

\subsection{Rate Equations for 4-node CPG}
\label{S:RE4CPG}
First, we consider the 4-node CPG of Figure \ref{F:4CPG}.
Following \cite{S14},
explicit rate equations  for this CPG
(written in a convenient order in which the activity variables and fatigue variables are collected together) are:
\begin{equation}
\label{E:4noderateeq}
\begin{array}{rcl}
	\varepsilon \dot{x}_1^E &=& -x_1^E + \GG(I - g x^H_1 + \alpha x^E_4 + \beta x^E_3 + \gamma x^E_2) \\
	\varepsilon \dot{x}^E_2 &=& -x^E_2 + \GG(I - g x^H_2 + \alpha x^E_3 + \beta x^E_4 + \gamma x^E_1) \\
	\varepsilon \dot{x}^E_3 &=& -x^E_3 + \GG(I - g x^H_3 + \alpha x^E_2 + \beta x^E_1 + \gamma x^E_4) \\ 	 
	\varepsilon \dot{x}^E_4 &=& -x^E_4 + \GG(I - g x^H_4 + \alpha x^E_1 + \beta x^E_2 + \gamma x^E_3) \\
	\dot{x}^H_1 &=& x^E_1 - x^H_1 \\
	\dot{x}^H_2 &=& x^E_2 - x^H_2 \\
	\dot{x}^H_3 &=& x^E_3 - x^H_3 \\			
	\dot{x}^H_4 &=& x^E_4 - x^H_4 
\end{array}
\end{equation}
Here the parameters are:
\begin{eqnarray*}
\alpha &=& \mbox{strength of {\em diagonal} connection} \\
\gamma &=& \mbox{strength of {\em medial} connection} \\
\beta &=& \mbox{strength of {\em lateral} connection} \\
\varepsilon &=& \mbox{fast/slow dynamic timescale} \\
g &=& \mbox{strength of reduction of activity variable by fatigue variable} \\
I &=& \mbox{input}
\end{eqnarray*}
See Figure \ref{F:4CPG}.

The dynamics and bifurcations of \eqref{E:4noderateeq} are studied
in \cite{S14}. Despite the nonlinear form of the equations, 
much of the bifurcation behaviour can be derived analytically. 
The only local bifurcation from a fully synchronous steady state
that can lead to stable states
near the bifurcation point is the `first bifurcation'. Here either one real eigenvalue 
passes through zero while the others have negative real part, or
a complex conjugate pair of eigenvalues crosses the imaginary axis
while the others have negative real part. These bifurcations are, respectively,
steady-state (leading to a branch of equilibria) and Hopf (leading to a branch
of periodic states. It turns out to be possible to find explicit conditions
on the connection strengths $\alpha, \beta, \gamma$ that ensure that
the first local bifurcation leads either to
one of four symmetry types of steady-state bifurcation, or
one of the four symmetry types of Hopf bifurcation---the four primary gaits.
It is also possible to identify explicitly
the region of $(\alpha, \beta, \gamma)$-space in which no local bifurcations occur.
Here the synchronous steady state remains stable and changes continuously
whatever value $I$ takes. This region corresponds to small values
of the connection strengths.

The argument
of the gain function in \eqref{E:4noderateeq} is a linear combination of state variables.
This implies that the four regions of $(\alpha, \beta, \gamma)$-space
corresponding to primary gaits are connected polyhedra.
Each is a truncated pyramid
with an equilateral triangle as base. They are congruent,
and together they form a hollow
regular tetrahedron centred at the origin, Figure~\ref{F:tetra}(left). This
is the complement of a small tetrahedron $\TT_\ast$ in a larger
tetrahedron $\TT^\ast$: formulas for 
the coordinates of their vertices can be found in \cite{S14}.

The four primary gait regions are relatively large when $K$ is significantly greater than $k$.
This is the case for reasonable values of the connection strengths
and other parameters in the model, 
so the structure provides a robust way to select specific gait patterns.

\begin{figure}[htb]
\centerline{%
\includegraphics[height=1.8in]{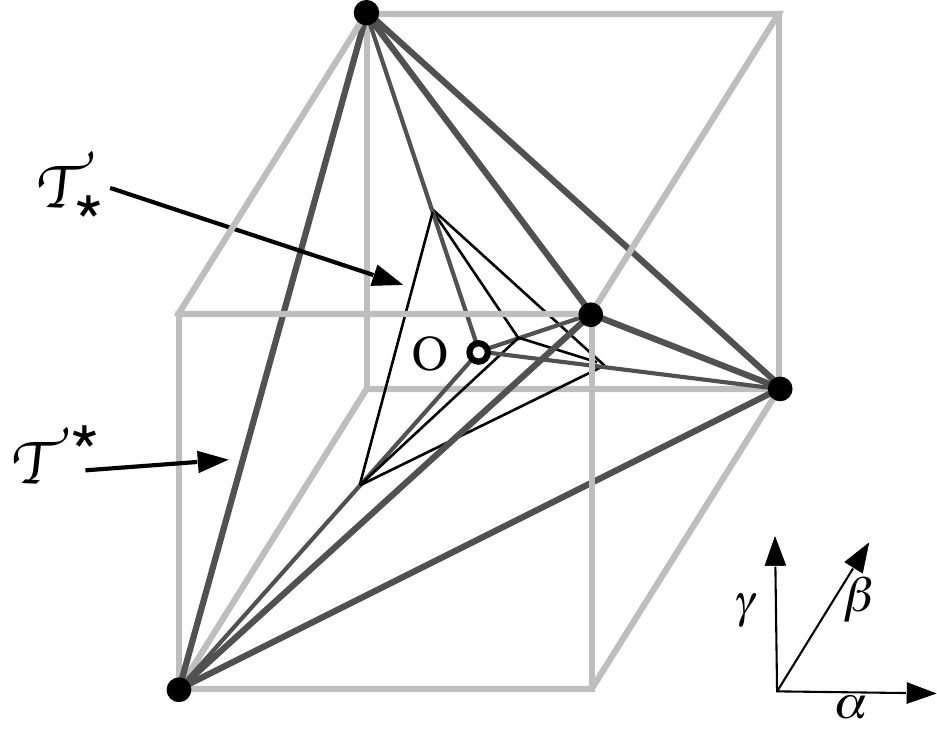} \quad 
\includegraphics[height=1.8in]{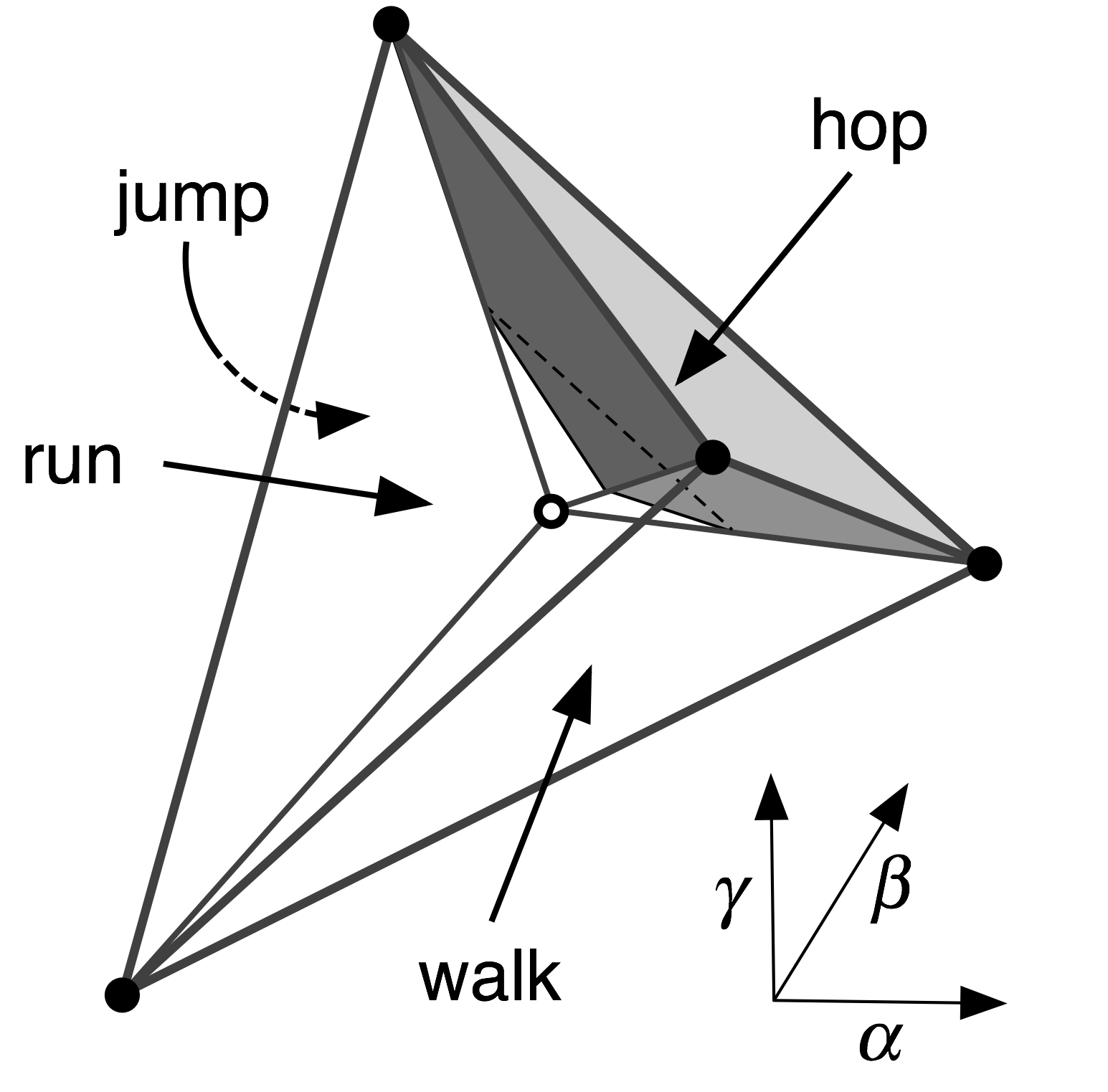} \quad
\includegraphics[height=2in]{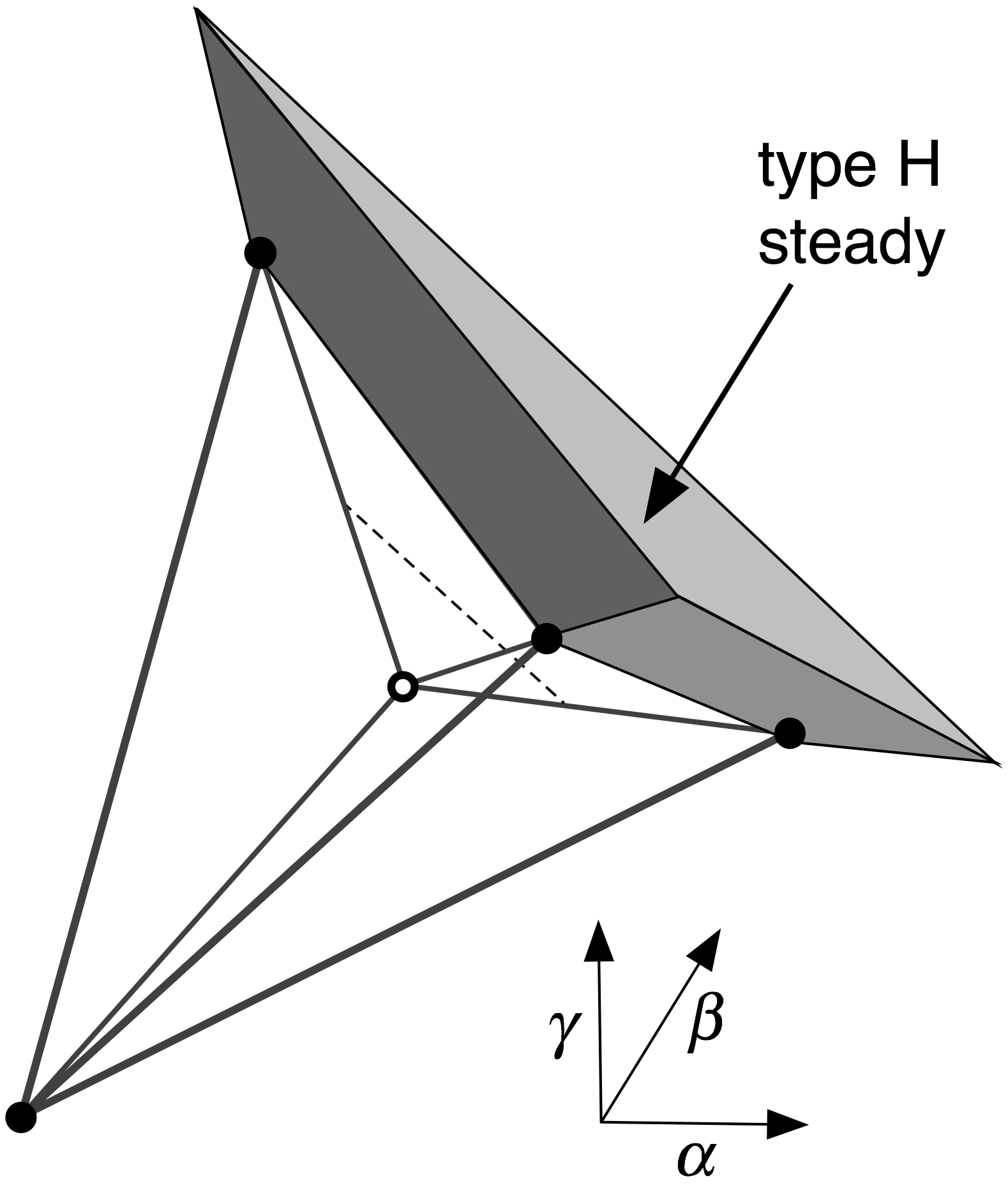}
}
\caption{Tetrahedral partition of  $(\alpha, \beta, \gamma)$-space. {\em Left}: The two
tetrahedra $\TT^\ast, \TT_\ast$. {\em Middle}: Region leading to
the hop gait. Base triangle left unshaded to show interior.
{\em Right}: Extending the pyramid to define
the corresponding steady-state regions, which extend to infinity
in the direction indicated.}
\label{F:tetra}
\end{figure}

Diekman \etal~\cite{DGMW12} analyse rate models in a network 
with the same symmetry group $\Z_2 \times \Z_2$ 
in a model of binocular rivalry. They assume a more general form of the gain
function, but take $\beta = 0, \gamma < 0$
because some connections in their model are always inhibitory.

\subsection{Simulations}
\label{S:Sim}

The four primary biped gaits in Table \ref{T:primary_gaits}
are observed numerically in the rate model \eqref{E:4noderateeq}; see
\cite[Section 14]{S14}. The gain function used there is \eqref{E:Ggen}. 
For simplicity, that paper assumes values
$a = 1, b = 8, c = 1$ in simulations, and we do the same here for
consistency.  Thus we take
\begin{equation}
\label{E:standardgain}
\GG(x) = \frac{1}{1+\ee^{-8(x-1)}} = \frac{1}{1+\ee^{8(1-x)}} 
\end{equation}
We now consider the corresponding lifted 
states for the two networks of Figure \ref{F:4CPGcascade}.
In particular, we discuss transverse Floquet multipliers
for parameter values that yield each primary gait.

\subsubsection{Synchrony and Phase Patterns}

We simulated the networks of Figure \ref{F:4CPGcascade} 
for a single additional module, 
using the parameter values of \cite[Section 14]{S14}
used to obtain Figures 19 (hop), 20 (run), 21 (jump), and 22 (walk) of that paper.
In all cases the results were identical to those of the CPG alone, with the
synchrony or phase pattern propagating correctly to the second module.

As explained in \cite[Section 4.4]{SW23a}, if the lifted periodic state is stable for one extra module
then it is stable for an arbitrary number of extra modules. Since the periodic oscillations 
observed in numerical experiments, and their synchrony patterns, are robust to changes in initial conditions,
we expect them to be stable. Thus the simulation for one extra module
indicates stability for any number of extra modules. The computed
Floquet multipliers in Tables~\ref{T:FloqBIPED} and \ref{T:FloqBIPED2module} confirm this
for the parameters stated there.

\subsection{Transverse Eigenvalues}

We now consider the transverse eigenvalues for the two networks in Figure~\ref{F:4CPGcascade}.

\subsubsection{Transverse Eigenvalues, $1$-Node Module}

Figure~\ref{F:4CPGcascade} (top) is a genuine feedforward lift. By Theorem~\ref{T:Ratestable},
all transverse eigenvalues have negative real parts. The same
caveat concerning implications for  Floquet stability applies.

\subsubsection{Transverse Eigenvalues, $2$-Node Module}

Theorem~\ref{T:Ratestable} does not apply to Figure~\ref{F:4CPGcascade} (bottom).
As observed earlier, it is enough to consider
the transverse eigenvalues for the module $\{5,7\}$. 

This network is not a feedforward lift according to \cite[Definition 3.4]{SW23a}
because $\widetilde\GG$ contains length-2 loops (the lateral connections)
that are not in $\GG$. However, a similar result
holds provided we replace the diagonal block by a block corresponding
to a {\em pair} of laterally connected nodes. For this calculation number those
nodes as 1, 2. Note that each node state space is 4-dimensional, so we are working
with a $4 \times 4$ matrix.

The equations for these two nodes are:
\beqn
\dot{x}^E_1 & = & -\frac{1}{\varepsilon }x^E_1 + \frac{1}{\varepsilon } \GG\left( -gx^H_1 +  \sum_{j\neq i} \alpha_{1j} x^E_j + I_1 \right) \\
\dot{x}^E_2 & = & -\frac{1}{\varepsilon }x^E_2 + \frac{1}{\varepsilon }\GG\left( -gx^H_2 +  \sum_{j\neq i} \alpha_{2j} x^E_j + I_2 \right) \\
\dot{x}^H_1 & = & x^E_1-x^H_1 \\
\dot{x}^H_2 & = & x^E_2-x^H_2
\eeqn
where we have grouped activity variables and fatigue variables together,
which simplifies the calculation of eigenvalues.

The Jacobian is:
\[
J = \Matrix{-\frac{1}{\varepsilon} & \frac{h}{\varepsilon}G_1 & -\frac{g}{\varepsilon}G_1 & 0 \\
	 \frac{h}{\varepsilon}G_2 & -\frac{1}{\varepsilon} & 0 &  -\frac{g}{\varepsilon}G_2 \\
	1 & 0 & -1 & 0 \\
	0 & 1 & 0 & -1}
\]
where
\beqn
G_1 &=& \GG'\left( -gx^H_1 +  \sum_{j\neq i} \alpha_{1j} x^E_j + I_1 \right)\\
G_2 &=& \GG'\left( -gx^H_2 +  \sum_{j\neq i} \alpha_{2j} x^E_j + I_2 \right)\\
h &=& a_{12}=a_{21} 
\eeqn

Therefore
\[
\varepsilon J = \Matrix{-1 & hG_1 & -gG_1 & 0 \\
	 hG_2 & -1 & 0 &  -gG_2 \\
	\varepsilon & 0 & -\varepsilon & 0 \\
	0 & \varepsilon & 0 & -\varepsilon}
\]

Since this is a $4 \times 4$ matrix, a {\em necessary} condition
for all eigenvalues to have negative real part is that the trace is negative
and the determinant is positive.
Calculations show that
\beqn
\tr(\varepsilon J) &=& -2 -2\varepsilon < 0 \\
\det(\varepsilon J) &=& \varepsilon^2[(1+gG_1)(1+gG_2)-h^2G_1G_2]
\eeqn
so the trace is always negative. However, the determinant is positive
if and only if
\[
h^2 < \left(g+\frac{1}{G_1}\right)\left(g+\frac{1}{G_2}\right)
\]
so 
\begin{equation}
\label{e:trans_stab}
|h| < \sqrt{\left(g+\frac{1}{G_1}\right)\left(g+\frac{1}{G_2}\right)}
\end{equation}
In other words: strong lateral coupling leads to transverse instability.

As parameters vary continuously, an eigenvalue
goes unstable if and only if $\det J = 0$.  Since $\varepsilon \neq 0$,
this occurs if and only if 
\[
\left(1+gG_1\right)\left(1+gG_2\right)-h^2G_1G_2 = 0
\]
That is, 
\begin{equation}
\label{e:det_zero}
h = \sqrt{\left(g+\frac{1}{G_1}\right)\left(g+\frac{1}{G_2}\right)}
\end{equation}
Therefore the synchrony subspace is stable transversely whenever~\eqref{e:trans_stab} holds.
The determinant argument above shows that it is unstable whenever
\[
h > \sqrt{\left(g+\frac{1}{G_1}\right)\left(g+\frac{1}{G_2}\right)}
\]
Therefore~\eqref{e:det_zero} determines the boundary between transverse stability
and instability of the synchrony subspace.

\subsubsection{Transverse Jacobians}

For comparison, we
also computed the eigenvalues of $J$ numerically for each primary
gait, using the parameter values of Figures 19--22 for the hop, run, jump, and walk gaits
respectively. All transverse eigenvalues (that is, the eigenvalues of $J$)
are negative on the periodic orbit, see Figure~\ref{F:biped_ev}. 

\begin{figure}[h!]
\centerline{%
\includegraphics[width=0.4\textwidth]{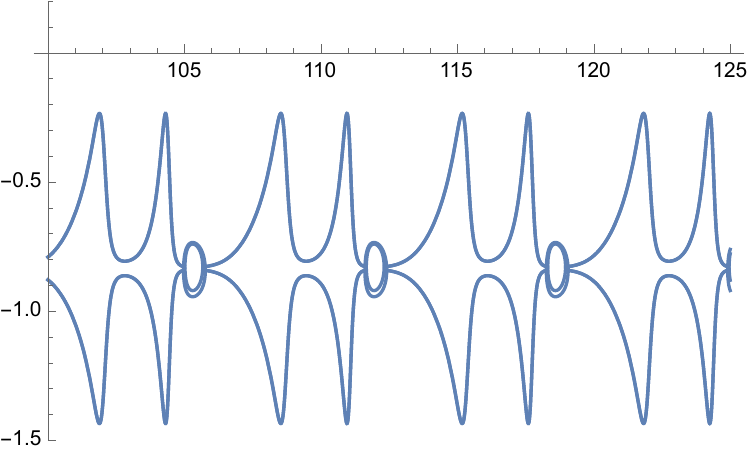} \qquad\
\includegraphics[width=0.4\textwidth]{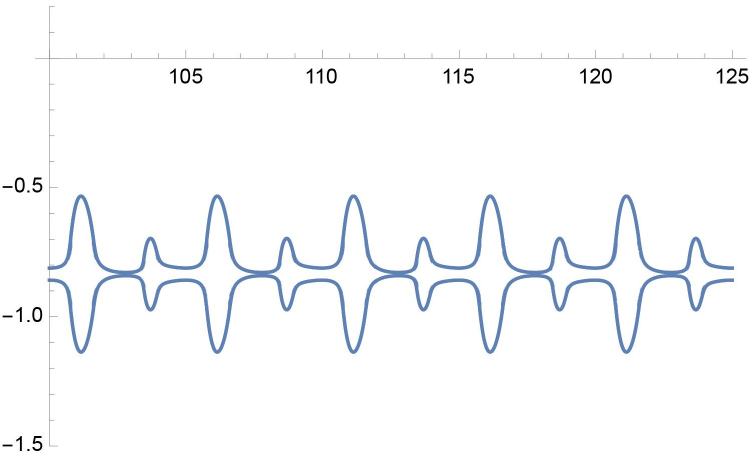}
}
\vspace{.1in}
\centerline{%
\includegraphics[width=0.4\textwidth]{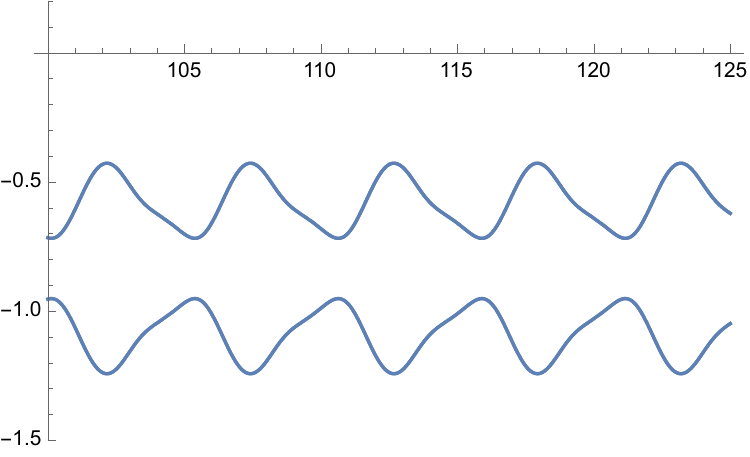} \qquad\
\includegraphics[width=0.4\textwidth]{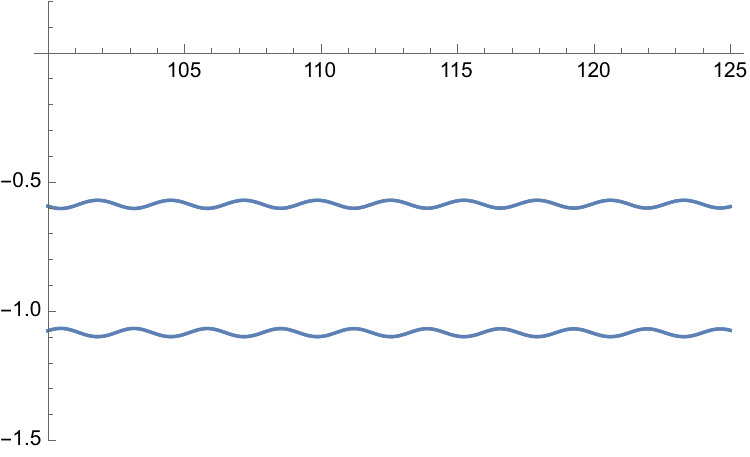}
}
\caption{Real parts of eigenvalues for the four primary biped gaits (plotted against time) 
are negative for the parameter values used in simulations in \cite[Section 14]{S14}. {\em Top left}: hop.
{\em Top right}: run. {\em Bottom left}: jump. {\em Bottom right}: walk. }
\label{F:biped_ev}
\end{figure}

In these figures the
real parts of complex conjugate pairs coincide, giving only
two values. The small ovals in `hop' correspond to real eigenvalues. 
All other eigenvalues are complex. Figure \ref{F:biped_im}, which plots the imaginary parts
of the eigenvalues, confirms these statements:
only the `hop' figure exhibits (short) intervals of zero imaginary part,
whose position matches the ovals in the real part.

\begin{figure}[h!]
\centerline{%
\includegraphics[width=0.4\textwidth]{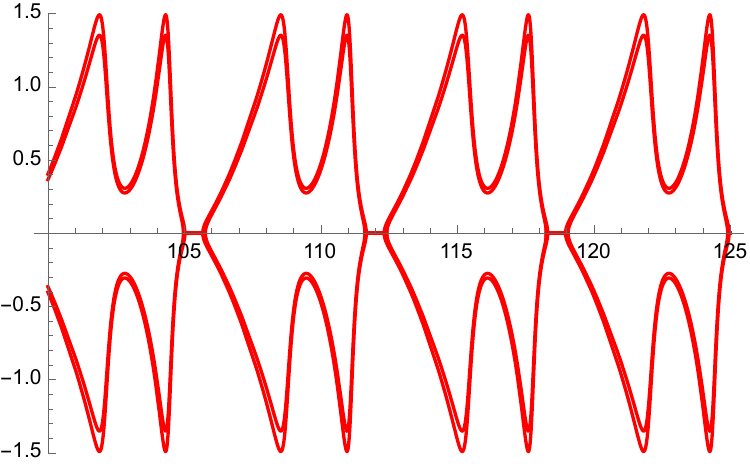} \qquad\
\includegraphics[width=0.4\textwidth]{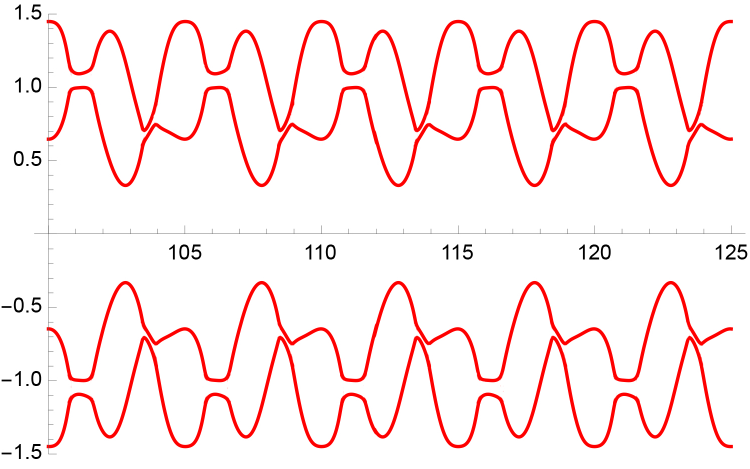}
}
\vspace{.1in}
\centerline{%
\includegraphics[width=0.4\textwidth]{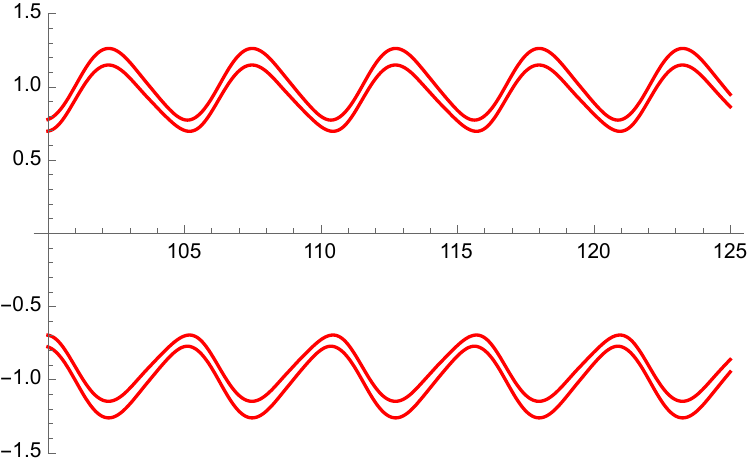} \qquad\
\includegraphics[width=0.4\textwidth]{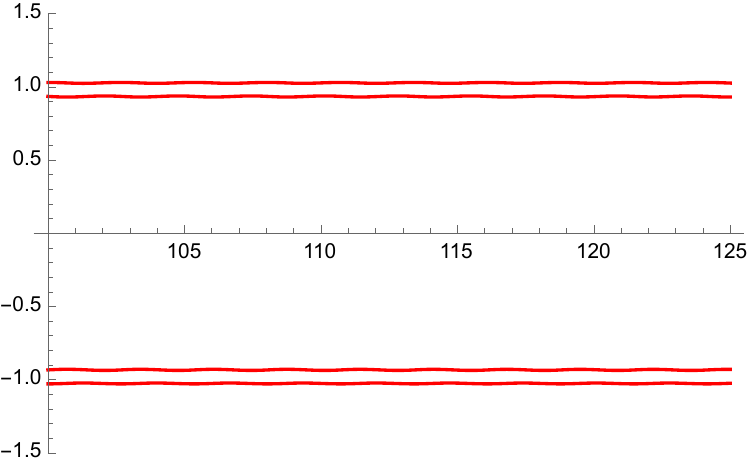}
}
\caption{Imaginary parts of eigenvalues for the four primary biped gaits (plotted against time) 
are negative for the parameter values used in simulations in \cite[Section 14]{S14}. {\em Top left}: hop.
{\em Top right}: run. {\em Bottom left}: jump. {\em Bottom right}: walk.}
\label{F:biped_im}
\end{figure}

\subsection{CPG Floquet Multipliers}

We computed the Floquet multipliers for the CPG, for the four gaits
{\em hop, run, jump, walk}, using the parameter values of \cite{S14}.
For all gaits we take $a=1,b=8,c=1,\eps=0.67$, $I=1.1$, $g=1.8$. (Actually  \cite{S14}
uses $I=0.8$ for the hop, but we replace this here by $1.1$.
The other parameters are:
\beqn
\mbox{hop}&:&\quad \alpha = 0.5 \quad \beta = 0.6 \quad \gamma= 0.8\\
\mbox{run}&:&\quad \alpha = -0.5 \quad \beta = -0.6 \quad \gamma= 0.8\\
\mbox{jump}&:&\quad \alpha = -0.5 \quad \beta = 0.6 \quad \gamma= -0.8\\
\mbox{walk}&:&\quad \alpha = 0.5 \quad \beta = -0.6 \quad \gamma= -0.8
\eeqn

The results for the 1-node module
of Figure \ref{F:4CPGcascade} (top)
are listed below in Table \ref{T:FloqBIPED}, which
also lists the period and the transverse Floquet multipliers.

The period and 
CPG Floquet multipliers for the 2-node module of 
Figure \ref{F:4CPGcascade} (bottom) 
are the same as in Table \ref{T:FloqBIPED}.
The transverse Floquet multipliers for the 2-node module 
are listed in
Table \ref{T:FloqBIPED2module}.

\subsection{Transverse Floquet Multipliers, $1$-Node Module}

We begin with Figure~\ref{F:4CPGcascade} (top),
which is a genuine feedforward lift. It is enough to consider
the transverse Floquet equation for node $5$. Use the notation $\alpha,\beta,\gamma$
of~\cite{S14} for connection strengths, and let the CPG periodic orbit be
\beqn
&&(x_1^E,x_2^E,x_3^E,x_4^E,x_1^H,x_2^H,x_3^H,x_4^H) =\\
&&\quad (a_1^E(t),a_2^E(t),a_3^E(t),a_4^E(t),a_1^H(t),a_2^H(t),a_3^H(t),a_4^H(t)) 
\eeqn
this is:
\begin{equation}
\label{E:node5}
\begin{array}{rcl}
\eps \dot x^E_5 &=& -x_5^E + \GG(-gx_5^H + \beta a_2^E(t)+\gamma a_3^E(t)+\alpha a_4^E(t)+I)\\
\dot x^H_5 &=& x^E_5-x^H_5
\end{array}
\end{equation}
Here the inputs $x_3^E(t), x_4^E(t), x_5^E(t)$ are evaluated on the periodic orbit of the CPG using \eqref{E:4noderateeq}, and the parameters $\eps,\alpha,\beta,\gamma, g, I$
refer to that equation.

\begin{table}[h!]
\small
\begin{center}
\begin{tabular}{|l|l||l|l||l|l|}
\hline
gait & $T$ & CPG multipliers  & abs. & trans. multipliers & abs. \\
\hline
\hline
Hop & $6.646$ & $0.999^*$  & $0.999$ & $0.00172$ & $0.00172$\\		
stable& & $0.00117$ & $0.00117$ &0.0000188& $0.0000188$ \\
 & & $0.000946 $ & 0.000946 & &  \\
& & $0.000400 $ & 0.000400 & &  \\
& & $0.000258$  & $0.000258$  & & \\
& & $0.0000143$  & $0.0000143$  & & \\
& & $4.27\times10^{-6}$  & $4.27\times10^{-6}$  & & \\
& & $2.35 \times10^{-6}$  & $2.35 \times10^{-6}$  & & \\

\hline
Run& $4.991$& $1$ & $1$ & $0.00685$ & $0.00685$\\
stable& & $0.150 $ & $0.150$ & 0.000571&0.000571 \\
& & $0.00714$ & $0.00714$ & & \\
& & $0.00110 $ & $0.00110$ & & \\
& & $0.000322 $ & $0.000322$ & & \\
& & $0.000166 $ & $0.000166$ & & \\
& & $0.0000773 $ & $0.0000773$ & & \\
& & $0.0000497 $ & $0.0000497$ & & \\
\hline
Jump & $5.257$ & $1$ & $1$ & $0.00522$ & $0.00522$\\
stable& & $0.505$ & $0.505$ &0.000389 &0.000389 \\
& & $ 0.000180 \pm 0.000387 \,\ii$ & $ 0.000427$ & & \\
& & $0.000180 \pm 0.000111 \,\ii$ & $0.000212$ & & \\
& & $0.0000580 \pm 0.0000273 \,\ii$ & $0.0000641$ & & \\
\hline
Walk &  $5.368$ & $0.998^*$ & $0.998$ & $0.00470$& $0.00470$ \\
stable&  & $ 0.916  $ & $ 0.916$ & 0.000325& 0.000325\\
& & $-8.09\times10^{-6} \pm  0.000424 \,\ii$ & $0.000424$ & & \\
& & $0.0000531 \pm 0.0000914 \,\ii$ & $0.000105$ & & \\
& & $0.0000452 \pm 0.0000322 \,\ii$ & $0.0000555$ & & \\
\hline
\end{tabular}
\caption{CPG and transverse Floquet multipliers for the four gaits of the biped
model with a 1-node module, Figure~\ref{F:4CPGcascade} (top).
The CPG periodic orbit and the lifted periodic orbit are stable for all four parameter sets.
All entries stated to three significant figures after the decimal point. $^*$These entries are presumably 
$1$ with small numerical error. See Remark~\ref{R:starred}.}
\label{T:FloqBIPED}
\end{center}
\end{table}

Numerical computation shows that all four gaits are Floquet-stable at the 
parameter values stated above.
The computed data (truncated to 3 significant figures, four for the period) 
are shown in Table~\ref{T:FloqBIPED}.

The transverse Floquet multipliers are generally smaller by a few orders of 
magnitude than the CPG Floquet multipliers, indicating a high degree
of stability of the lifted periodic orbit.

\begin{remark}\em
\label{R:starred}
As regards the starred entries in Table~\ref{T:FloqBIPED}, it 
is observed in \cite{LE02} that in numerical
integration of Floquet equations: `The accuracy of the computed trivial multiplier is not always comparable to the accuracy of the computed periodic solution and the accuracy of the other computed multipliers.'
\end{remark}

\subsection{Transverse Floquet Multipliers, 2-Node Module}

For the 2-node module, with the same parameters for the CPG,
the Floquet multipliers for the CPG are unchanged, so the periodic orbits
are Floquet-stable. The transverse Floquet multipliers change, and we
compute them to get table \ref{T:FloqBIPED2module}.
Again, all gaits are Floquet-stable for these parameters, and
the transverse Floquet multipliers are generally smaller by a few orders of 
magnitude than the CPG Floquet multipliers, indicating a high degree
of stability of the lifted periodic orbit.

The main conclusion is that in this case, replacing feedforward arrows by lateral arrows
does not affect the stability significantly.

\begin{table}[h!]
\small
\begin{center}
\begin{tabular}{|l||l|l|}
\hline
gait & transverse multipliers & absolute value \\

\hline
\hline
Hop & $0.0184$ & $0.0184$ \\		
stable & $0.000322$ & $0.000322$ \\	
 & $0.000216$ & $0.000216$ \\
  & $3.16\times10^{-6}$ & $3.16\times10^{-6}$ \\		
\hline
Run & $0.0281$ & $0.0281$ \\
stable & $0.0190$ & $0.0190$ \\
 & $0.000283$ & $0.000283$ \\
 & $0.000102$ & $0.000102$ \\
\hline
Jump  & $0.000576 \pm 0.0182 \,\ii$ & $0.0182$ \\
stable & $0.0000931 \pm 0.0000614 \,\ii$ & $0.000111$ \\  
\hline
Walk  & $-0.00498 \pm 0.0199 \,\ii$ & $0.0205$ \\
stable & $0.0000502 \pm 0.0000559 \,\ii$ & $0.0000751$ \\
\hline
\end{tabular}
\caption{Transverse Floquet multipliers for the four gaits of the biped
model with a 2-node module, Figure~\ref{F:4CPGcascade} (bottom).
The CPG periodic orbit and the lifted periodic orbit are stable for all four parameter sets.
All entries stated to three significant figures after the decimal point. }
\label{T:FloqBIPED2module}
\end{center}
\end{table}

\subsection{Parameters for which Proposition \ref{P:Liap2} Applies}
\label{S:PCA}

To facilitate comparison with  \cite{S14} we have used the parameters
values of that paper, where $g=1.8$ and the gain function is \eqref{E:standardgain}.
However, this 
this value is slightly too big to apply Proposition \ref{P:trans_stab_Floq},
which requires $g < 1.5$.
We now exhibit different simulations if the four primary gaits for $g=1.4$, so that 
Proposition \ref{P:Liap2} applies.

We take the following parameter values for the four gaits:
\begin{eqnarray}
 \label{E:RATEparamsNEW1}
\mbox{\em Hop}: && I=0.7 \quad \alpha = 0.5 \quad  \beta = 0.6 \quad \gamma = 0.8\quad g=1.4 \quad \eps = 0.5\\
\label{E:RATEparamsNEW2}
\mbox{\em Jump}: && I=1.1 \quad \alpha = -0.5 \quad  \beta = 0.6 \quad \gamma = -0.8\quad g=1.4 \quad \eps = 0.5\\
\label{E:RATEparamsNEW3}
\mbox{\em Run}: && I=1.1 \quad \alpha = -0.5 \quad  \beta = -0.6 \quad \gamma = 0.8\quad g=1.4 \quad \eps = 0.5\\
\label{E:RATEparamsNEW4}
\mbox{\em Walk}: && I=1.1 \quad \alpha = 0.5 \quad  \beta = -0.6 \quad \gamma = -0.8\quad g=1.4 \quad \eps = 0.5
\end{eqnarray}

\begin{figure}[h!]
\centerline{
\includegraphics[width=.18\textwidth]{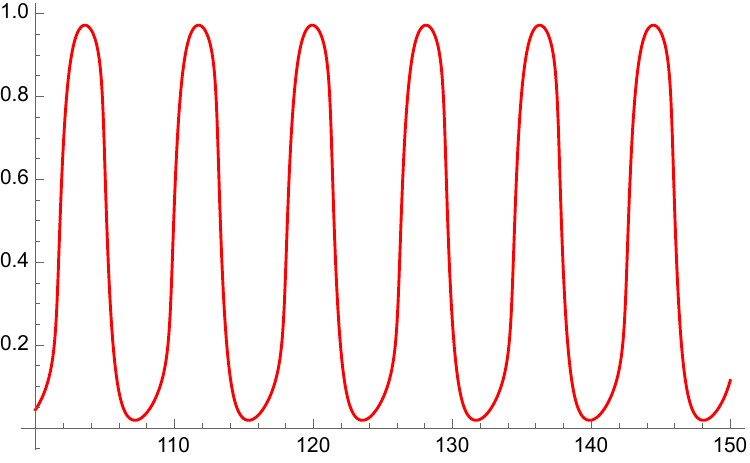} \ 
\includegraphics[width=.18\textwidth]{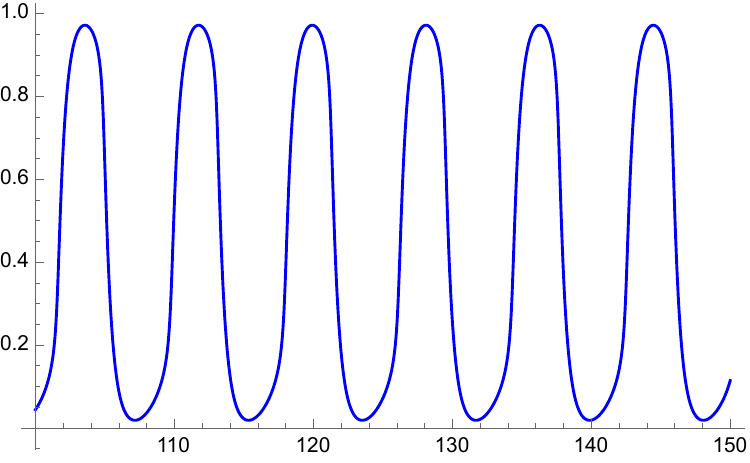} \ 
\includegraphics[width=.18\textwidth]{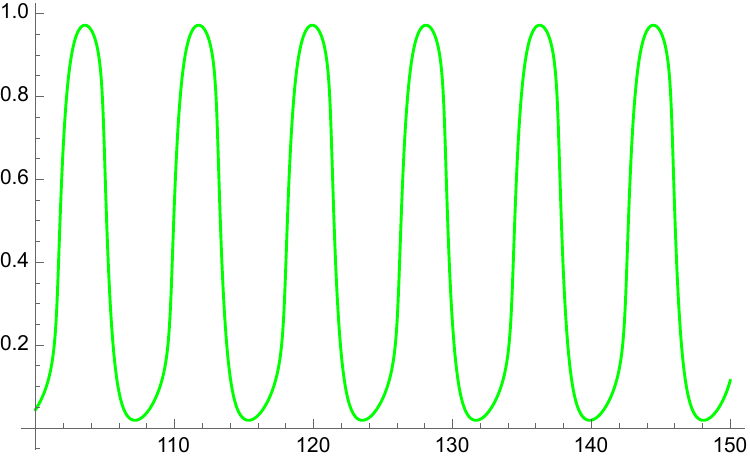} \ 
\includegraphics[width=.18\textwidth]{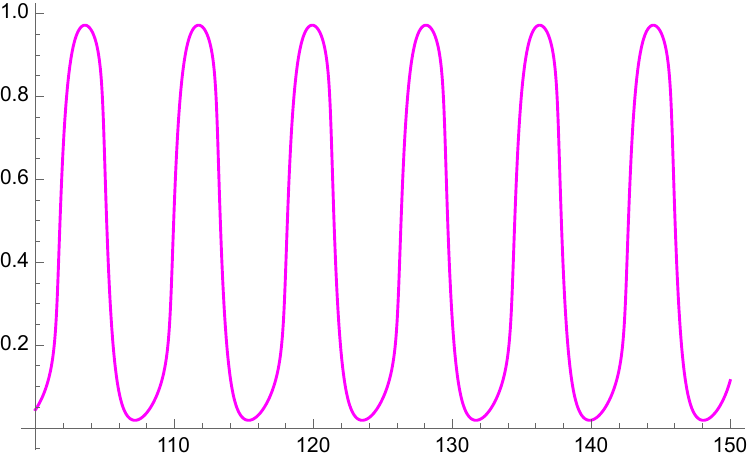} \ 
\includegraphics[width=.18\textwidth]{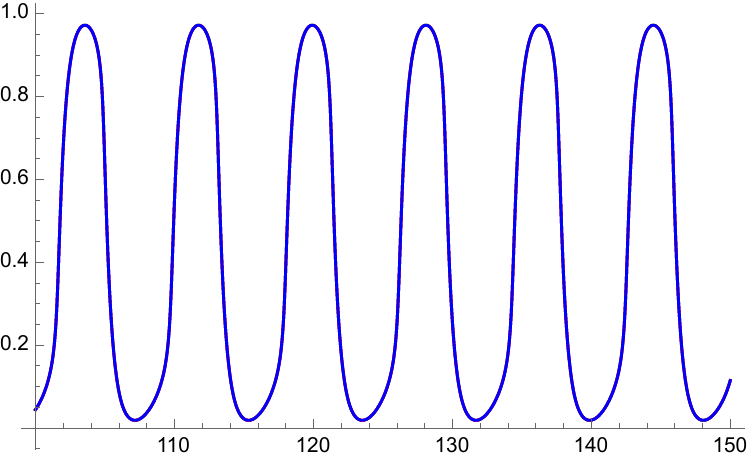}
}
\centerline{
\includegraphics[width=.18\textwidth]{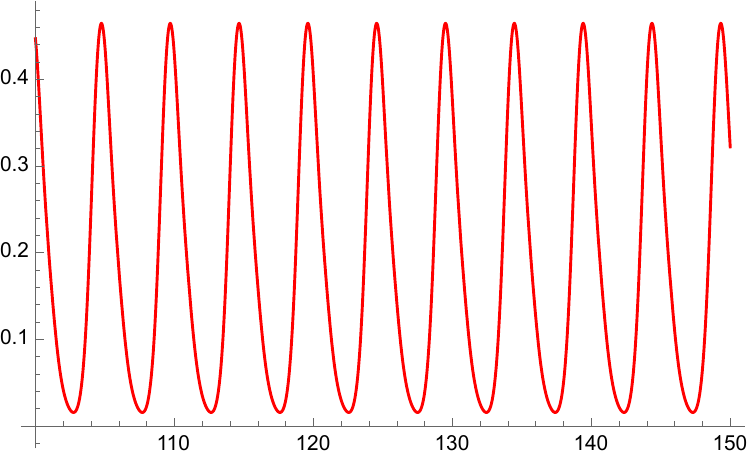} \ 
\includegraphics[width=.18\textwidth]{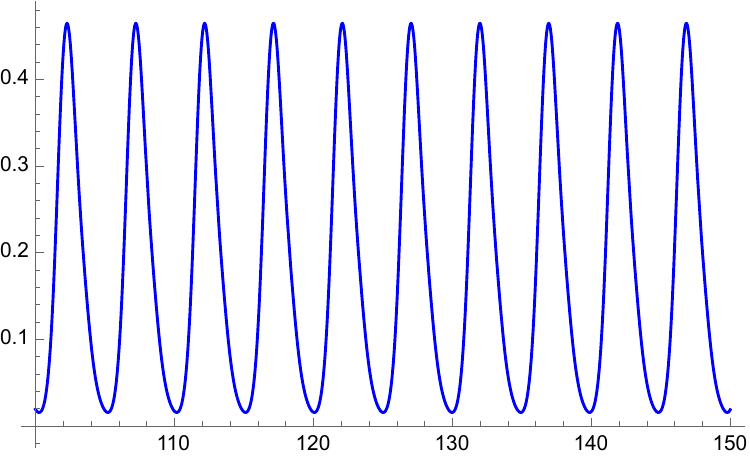} \ 
\includegraphics[width=.18\textwidth]{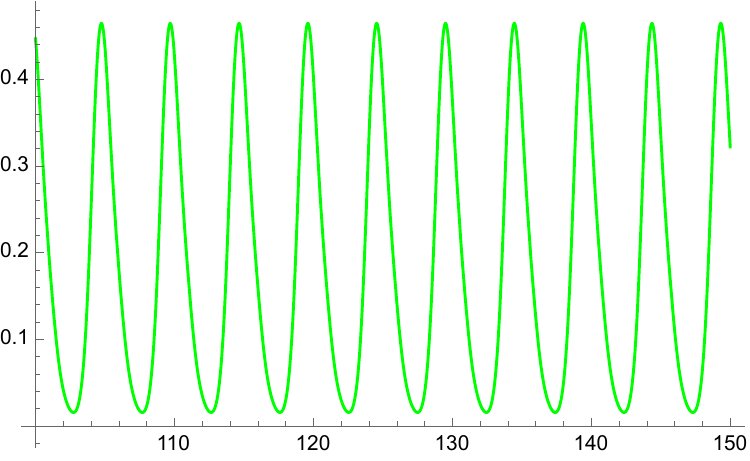} \ 
\includegraphics[width=.18\textwidth]{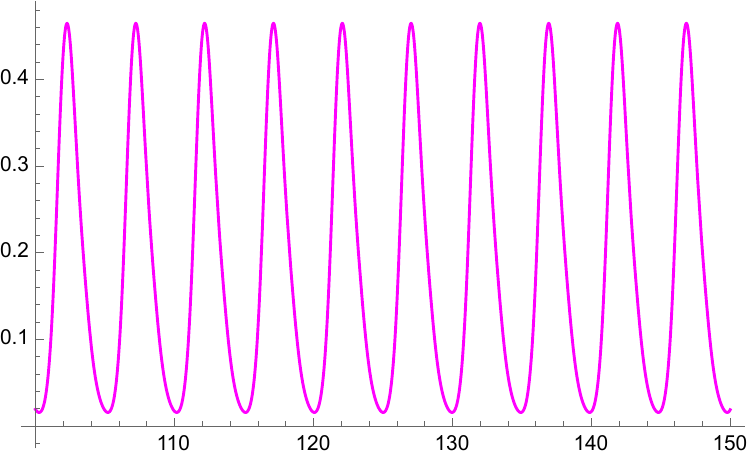} \ 
\includegraphics[width=.18\textwidth]{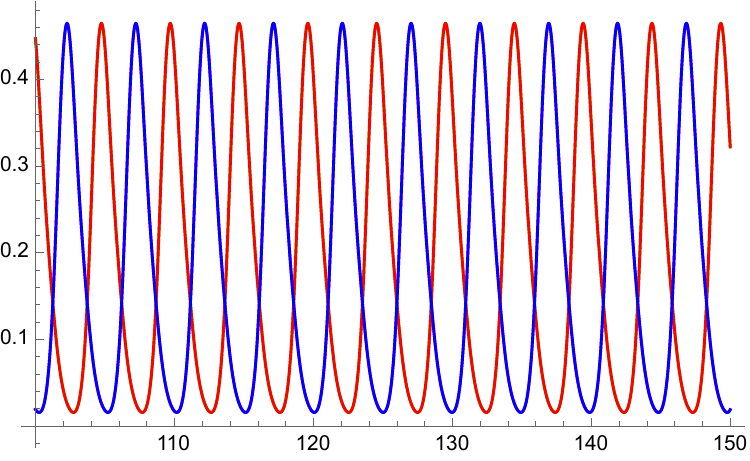}
}
\centerline{
\includegraphics[width=.18\textwidth]{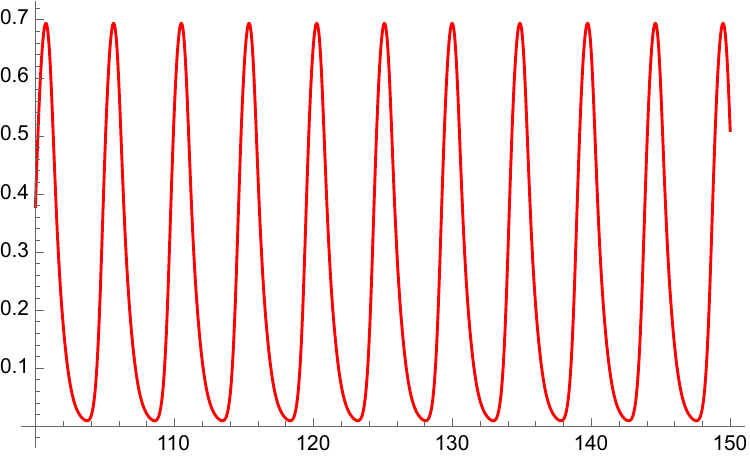} \ 
\includegraphics[width=.18\textwidth]{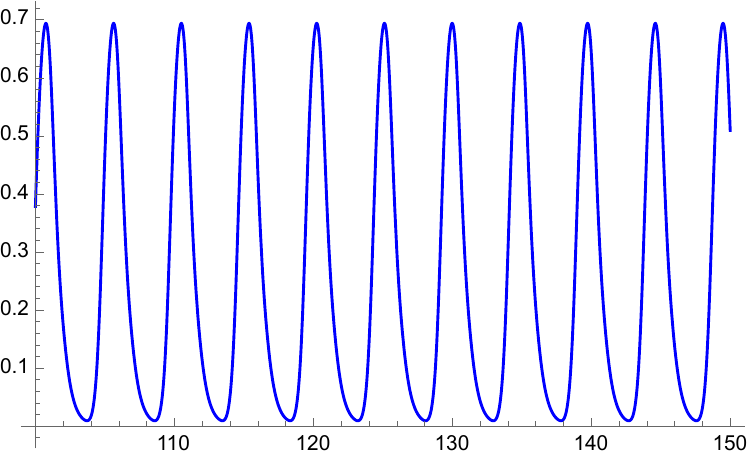} \ 
\includegraphics[width=.18\textwidth]{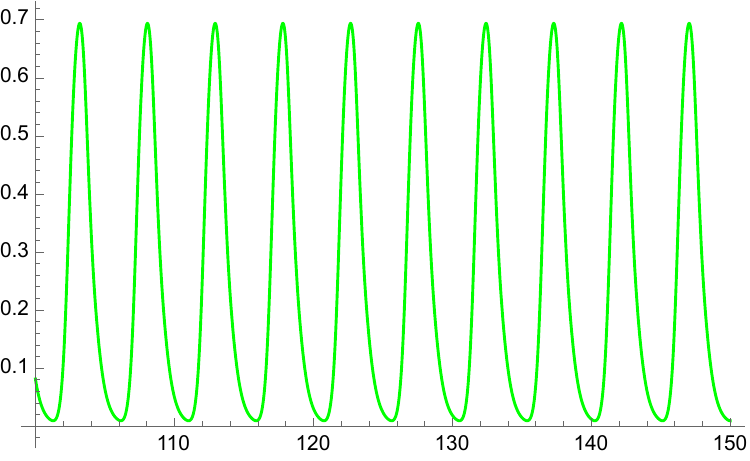} \ 
\includegraphics[width=.18\textwidth]{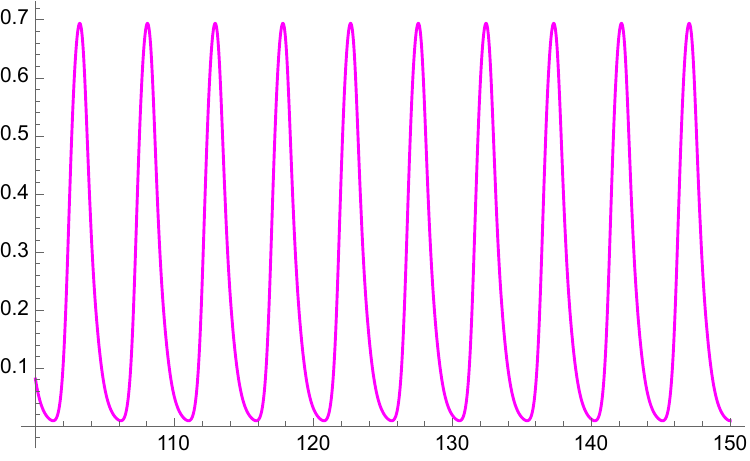} \ 
\includegraphics[width=.18\textwidth]{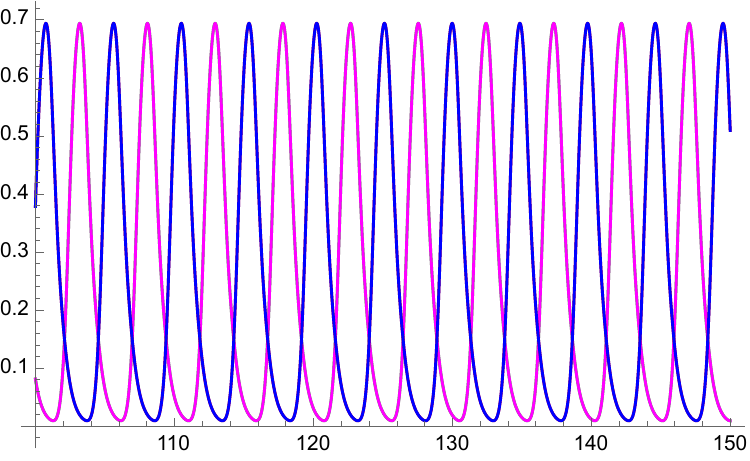}
}
\centerline{
\includegraphics[width=.18\textwidth]{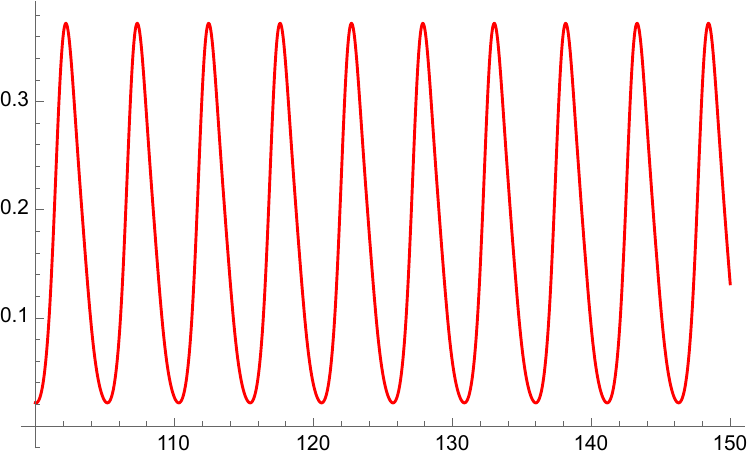} \ 
\includegraphics[width=.18\textwidth]{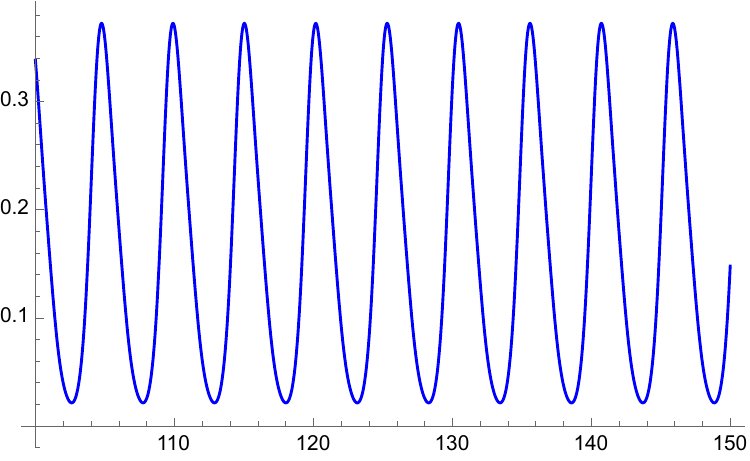} \ 
\includegraphics[width=.18\textwidth]{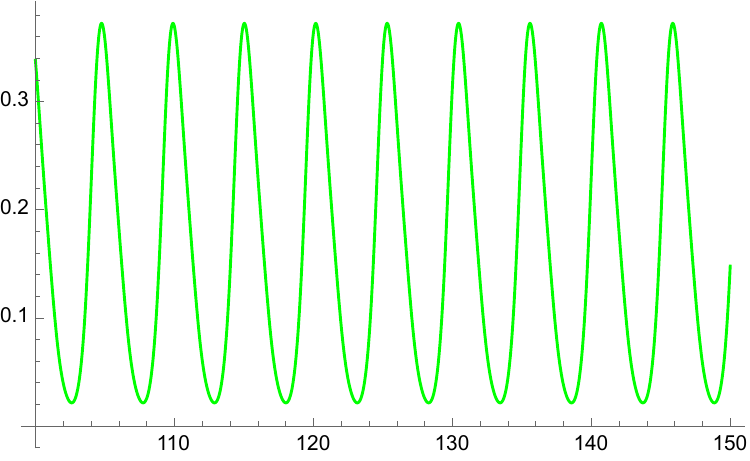} \ 
\includegraphics[width=.18\textwidth]{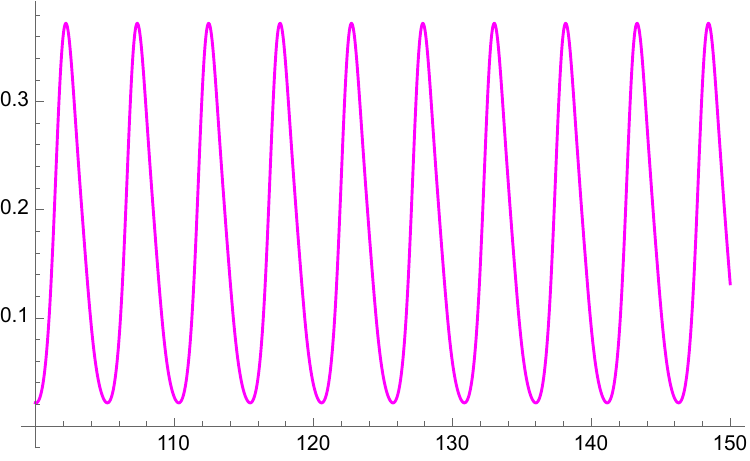} \ 
\includegraphics[width=.18\textwidth]{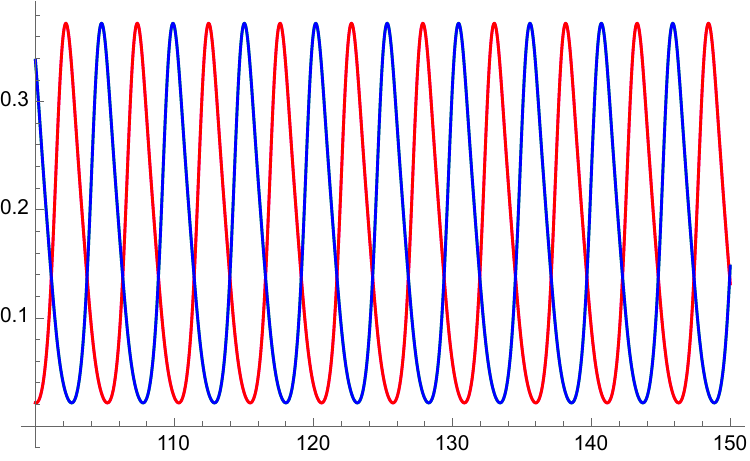}
}
\caption{Time series of selected activity variables for a rate model of 
Figure \ref{F:4CPGcascade} (top). Each row shows $x^E_i$ for $1 \leq i \leq 4$,
followed by all nodes in the chain (up to node 8) superposed.
{\em From top to bottom}: hop, jump, run, walk.
}
\label{F:GAITSg=1.4}
\end{figure}

Proposition \ref{P:trans_stab_Floq} does not apply with these parameters
because $\eps = 0.5$ and this requires $g < 2.22474/2 = 1.11237$.
However, Proposition \ref{P:Liap2} does apply, since when $\eps =0.5$ 
this requires $g < 4.82843/2 = 2.41411$, so these gaits are transversely 
Liapunov Stable for the stated parameter values.

More refined estimates are also possible, because the range of values
taken by $\eta(t)$ in \eqref{E:eta(t)} need not
include $x=1$, where $\GG'(x)$ attains its maximum value of $2$s. 
In Figure \ref{F:GAITSg=1.4} all oscillations are less than 1. The
maxima are roughly
\beqn
\mbox{\em hop} &:& 0.97 \\
\mbox{\em jump} &:& 0.46 \\
\mbox{\em run} &:& 0.69 \\
\mbox{\em walk} &:& 0.37      
\eeqn 
Since $\GG'(x)$ is monotone increasing for $x < 1$ the 
corresponding bounds for $g$ are $1.52$, $28.95$, $5.25$, and $58.67$.
Thus the estimates needed for the jump, run, and walk are more generous.
With other values of $\eps$, modified estimates of this kind can
prove transverse Floquet stability when Proposition \ref{P:trans_stab_Floq}
does not apply. We do not attempt to state such modified estimates in general,
but there is clearly scope for further improvements.

\ignore{
\subsection{Challenges to the Prevailing Paradigm} 
\label{S:CPP}
\RED{ FOOTNOTE}\footnote{Speculative. Think it over. Move? Delete?}

Recent data on neuron activity in the spinal cord of turtles \cite{LPVB22} suggests that the
prevalent paradigm for control of animal locomotion may require modification.
The authors remark that `As flexor and extensor muscle activities alternate 
during rhythmic movements such as walking, it is often assumed that the 
responsible neural circuitry is similarly exhibiting alternating activity.'  Their
measurements show different behaviour: 
 a population of neurons in the spinal cord performs a low-dimensional
`rotation' in neural space, as shown by Principal Component Analysis. Thus the neural
activity in the spinal cord cycles continuously through all phases.

The model of propagation presented here assumes that a CPG
generates the rhythmic
patterns, but it is also consistent with
the above remarks. Measurement of any neuron in the chain would
show a periodic cycle, and measurements of other neurons would show
the simultaneous presence of many different phases of the same cycle
at different locations.  
The time series show that the networks proposed here can 
generate continuous periodic signals for many different
neuron equations, and do not require the activity of any set of neurons to 
alternate between `on' and `off' states. Instead, all neurons in the chain participate in the
dynamics at all times.
}

\section{Conclusions}
\label{S:C}

Regular phase patterns are common in networks of dynamical systems 
with suitable symmetries, which in particular suggests modeling animal gait patterns 
using symmetric networks. 
Animal gaits are widely thought to be created by a central pattern
generator (CPG), located in the spinal column. Several model
CPGs have been studied previously from the viewpoint
of symmetric dynamics.

Networks with feedforward lift structure (CPG + chain) naturally 
propagate synchronous or phase-synchronous states along linear chains.
This type of network topology suggests new and physiologically plausible
networks for animal locomotion, which we analysed in the case of bipeds.
We studied two such feedforward lifts using a CPG proposed by
Pinto and Golubitsky \cite{PG06} and extending it along a chain
of arbitrary length formed from `modules' that are copies of the CPG.

An important issue is the stability of the propagating signals. This 
splits into stability under synchrony-preserving perturbations (stability for the CPG)
and  stability under synchrony-breaking perturbations
(transverse stability). This decomposition was discussed
in \cite{SW23a} for several stability notions, and was specialized here to the
two feedforward lifts. Using a rate model, we established Floquet stability 
(for suitable parameters) using numerical calculations of CPG and 
transverse Floquet multipliers.
Liapunov stability was established analytically using two Liapunov functions.

Feedforward lifts provide a simple, effective, and robust way to propagate signals
with specific synchrony and phase patterns in a stable manner. They
suggest biologically plausible topologies for neuronal networks that
propagate such signals.



\end{document}